\newcommand{\F}{\mathbb{F}}
\newcommand{\N}{\mathbb{N}}
\newcommand{\Q}{\mathbb{Q}}
\newcommand{\jumpm}[2]{\operatorname{Jump}_{#1}(#2)}
\renewcommand {\epsilon}{\varepsilon}
\renewcommand {\leq}{\leqslant}
\renewcommand {\geq}{\geqslant}
\newcommand{\ssi}{\Leftrightarrow}
\newcommand{\ppcm}{\text{ppcm}}
\newcommand{\tq}{\;\: | \:\:}
\newcommand{\mesp}{\;\:}
	\newcommand {\vfi}{\varphi}
	\theoremstyle{plain}
	\newtheorem{thm}{Th\'{e}or\`{e}me}[section]
	\newtheorem{lem}[thm]{Lemme}
	\newtheorem{prop}[thm]{Proposition}
	\newtheorem{corr}[thm]{Corollaire}
	\theoremstyle{definition}
	\newtheorem{defn}[thm]{D\'{e}finition}
	\newtheorem{exemple}[thm]{Exemple}
	\theoremstyle{remark}
	\newtheorem{remarque}[thm]{Remarque}
	\newtheorem{nota}[thm]{Notation}
	\newtheorem{fait}[thm]{Fait}
\author{Gönenç Onay }
\thanks{Research partially funded by DFG through SFB 878.}
\def\blfootnote{\gdef\@thefnmark{}\@footnotetext}
\title{Modules $C$-minimaux sur des anneaux polyn\^omes tordus}
\date{}
\begin{document}
	\sloppy

		\maketitle
	\selectlanguage{french}
	
	\begin{abstract}
		Dans cette article on \'etudie les modules munis d'une ultram\'etrique par le point de vue de la notion geom\'etrique, $C$-minimialit\'e. On donne une caract\'erisation compl\`ete des modules valu\'es $C$-minimaux sur des anneaux non-commutatives de la forme $R:=K[t;\vfi]$, o\`u $K$ est un corps, $\vfi$ est un endomorphisme de $K$ et $R$ est le $K$-alg\`ebre engendr\'e par $t$, tel que $at=ta^{\vfi}$ pour $a\in K$.
		
		On en d\'eduit par exemple que l'anneau des s\'eries de Puiseux sur un corps fini $\mathbb{F}$ de caract\'eristique $p>0$, est $C$-minimal comme module valu\'e sur l'anneau $\mathbb{F}[t;x\mapsto x^p]$, l'anneau des polyn\^omes additives sur $\mathbb{F}$. De plus,  tout ultraproduit $\mathcal{K}$, des corps alg\`ebriquement clos valu\'es $\mathcal{K}_{p^n}$ de caract\'eristique  $p>0$, \'equipp\'e chacun du morphisme $x\mapsto x^{p^n}$, selon un ultrafiltre $U$ sur $\{p^n \tq n\in \mathbb{N}, \; \text{et} \; p \; \text{premier}\}$  muni de l'automorphisme  limite, {\it Frobenius non-standard}, i.e., $\sigma_{U}:=\lim_{U} x \mapsto x^{p^n}$, est $C$-minimal comme $\mathcal{K}[t;\sigma]$-module valu\'e.
	\end{abstract}
	
	\selectlanguage{english}
	\begin{abstract}
In this article we study modules endowed with a ultrametric, from the point of view of the geometric notion $C$-minimality. We give a complete characterization of 
$C$-minimal valued modules over non-commutative rings of skew polynomials of the form $R:=K[t;\vfi]$, where $K$ is a field, $\vfi$ an endomorphism of $K$ and
$R$ is the $K$-algebra generated by $t$, such that $at=ta^{\vfi}$ for $a\in K$.

We deduce for instance that the ring of Puiseux series over a finite field $\mathbb{F}$ of characteristic $p>0$, as a valued module over $\mathbb{F}[t;x\mapsto x^p]$ is
$C$-minimal. Moreover, any ultraproduct  $\mathcal{K}$, of 
algebraically closed valued fields $\mathcal{K}_{p^n}$ of characteristic $p>0$, endowed each with the  morphism  $x\mapsto x^{p^n}$,  following a ultrafilter $U$ over  $\{p^n \tq n\in \mathbb{N}, \, \text{et} \; p \; \text{prime}\}$, equipped with the {\it non-standart Frobenius}, i.e., the map
$\sigma_{U}:=\lim_{U} x \mapsto x^{p^n}$, is  $C$-minimal as a $\mathcal{K}[t;\sigma]$-valued module.
	\end{abstract}

	\selectlanguage{french}
	\section{Introduction}
 Pour nous, une $C$-relation est une relation ternaire satisfaisant les axiomes suivants:
\begin{enumerate}
\item $\forall x,y,z \mesp  { C}(x,y,z)\rightarrow { C}(x,z,y) $,
\item $\forall x,y,z \mesp { C}(x,y,z) \rightarrow \neg { C}(y,x,z)$,
\item $\forall x,y,z,w \mesp  { C}(x,y,z)\rightarrow ({ C}(w,y,z)\vee { C}(x,w,z)),$

\item $\forall x,y \mesp x\neq y \rightarrow { C}(x,y,y)$.
\end{enumerate}

Une $C$-relation interprète toujours un arbre (cf. \cite{Adeleke1998}): pour des chaînes maximales $x,y$ et $z$,  ${ C}(x,y,z)$, exprime que $y$ et $z$  branchent au-dessus de là où $x$ et $y$ (ou $z$) branchent.

On rappelle qu'une
distance ultramétrique, est un triplet $(M,\Gamma, d)$,  o\`u $M$ est un ensemble, $\Gamma$ est un ordre linaire ayant un minimum $0$, et $d:M^2\to \Gamma$, une application surjective, telle que, pour tout $x,y,z \in M$, 
\begin{enumerate}
	\item $d(x,y)=0$ si et seulement si $x=y$,
	\item $d(x,z)\leq \min\{d(x,y),d(y,z)\}$.
\end{enumerate}  

Une distance ultram\'etrique induit la $C$-relation 
$${ C}(x,y,z) \ssi d(x,y)=d(x,z)>d(y,z).$$
Donc un groupe abélien ou un corps valué porte en particulier la $C$-relation canonique en inversant l'in\'egalit\'e ci-dessus:
$${ C}(x,y,z) \ssi v(x-y)=v(x-z)<v(y-z).$$ 
Une valuation triviale, c'est-à-dire  prenant au plus deux valeurs, induit la
$C$-relation triviale: ${ C}(x,y,z) \ssi x\neq y=z$.

\begin{defn}On dit qu'une expansion $\mathcal{U}=(U,{ C}, \dots)$ d'une $C$-relation est $C$-minimale si, pour tout $\mathcal{U}'=(U',{ C}, \dots)$, élémentairement équivalente à $\mathcal{U}$, tout sous-ensemble définissable (avec paramètres) de $U'$, est définissable sans quantificateur en n'utilisant que la relation ternaire $C$.
\end{defn}
 
\begin{remarque} En présence d'une distance ultramétrique, la $C$-minimalité se traduit par le fait que tout sous-ensemble définissable  (de la structure elle-même) est une combinaison booléenne de boules «ouvertes» ou «fermées».
\end{remarque}

   La $C$-minimalité apparaît comme une généralisation commune de la minimalité forte et de l'$o$-minimalité. Par exemple, une structure munie de la $C$-relation triviale est $C$-minimale si et seulement si elle est 
fortement minimale. Plus généralement ces trois notions sont étroitement liées. Notons également que les théories $C$-minimales sont dépendantes, i.e, elles ont la propriété  NIP, comme les théories fortement minimales, $o$-minimales et $P$-minimales (on pourra consulter \cite{Haskell1997} pour les définitions des théories $o$-minimales, $C$-minimales et $P$-minimales, dont  le théorème 4.2.6 et la discussion qui le précède pour le résultat concernant NIP). Noter que si une structure est  $o$-minimale, alors il va de même pour sa théorie complète, i.e. toute structure qui lui est élémentairement équivalente est  $o$-minimale. Cette dernière propriété est fausse pour les structures fortement minimales et par  voie de conséquence pour les structures $C$-minimales.

Le groupe additif de $\Q_{ p}$, muni de sa valuation  $p$-adique,  est $C$-minimal mais le corps $\Q_{p}$ ne l'est pas, par exemple
l'ensemble des carrés n'y est pas définissable sans quantificateur dans le langage des corps valués, donc  dans le langage des corps équipés
de la $C$-relation non plus. De fait, un corps valué est $C$-minimal si et seulement s'il est algébriquement clos (cf. \cite{Haskell1994}).

Dans cette \'etude, on reprend les notations et les d\'efinitions (cf. Section 2 pour un rappel) figurant dans nos articles \cite{Onay2017} et \cite{Onay2018a}. Soit $K$ un corps et $\vfi$ un endomorphisme de $K$. On pose $R:=K[t;\vfi]$, $K$-alg\`ebre engendr\'e par $t$ selon la loi de commutativit\'e 
$$at=ta^{\vfi}$$ pour tout $a\in K$. 
Dans cette article, on  caractérise les $R$-modules valués  $C$-minimaux.

Nos résultats principaux  se résument dans les trois théorèmes 
ci-dessous. 

\begin{thm}\label{thmfrtmin}
Un $R$-module trivialement valué infini $M$ est $C$-minimal (i.e. fortement minimal dans ce cas, justifiant la notation $M$, à la place de $(M,v)$) si et seulement si l'une des conditions ci-dessous est vérifiée:
\begin{enumerate}[$1.$]
\item $M$ est divisible et, pour tout $r \in R\setminus\{0\}$, $\text{ann}_M(r)$ est fini
\item l'idéal annulateur de $M$, $\{q \in R \tq x.q=0, \mesp \forall x \in M\}$, est engendré par un polynôme irr\'eductible $r$, $R/rR$ est un corps et 
$M$ est un $(R/rR)$-espace vectoriel. 
\end{enumerate}
\end{thm}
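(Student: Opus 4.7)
Mon approche repose sur le théorème de Baur-Monk. Pour tout $R$-module $M$, les parties définissables de $M$ en une variable sont combinaisons booléennes de translatés de sous-groupes \emph{pp}-définissables. Comme $M$ est infini, un sous-groupe propre d'indice fini serait à la fois infini et de complémentaire infini, donc violerait la forte minimalité; celle-ci équivaut ainsi à la propriété que \emph{tout sous-groupe pp-définissable de $M$ en une variable est fini ou égal à $M$}. Par ailleurs, $R=K[t;\vfi]$ étant principal à gauche, la forme normale de Smith adaptée à $R$ (cf.\ \cite{Onay2017,Onay2018a}) ramène tout pp-sous-groupe en une variable à une intersection finie d'ensembles du type $b^{-1}(dM):=\{m\in M : bm\in dM\}$ avec $b,d\in R$; on y reconnaît en particulier $M\cdot r$ (cas $b=1,\,d=r$) et $\text{ann}_M(r)$ (cas $b=r,\,d=0$).

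Pour le sens direct, l'application de la dichotomie à $M\cdot r$ et $\text{ann}_M(r)$, conjuguée à l'isomorphisme $M\cdot r \cong M/\text{ann}_M(r)$ (comme groupes abéliens), force, pour tout $r\neq 0$, l'alternative: soit $r\in I:=\ann(M)$ (auquel cas $M\cdot r=0$), soit $M\cdot r=M$ et $\text{ann}_M(r)$ est fini. Lorsque $I=0$, on reconnaît directement le point 1. Sinon, on prouve successivement que (i) $R/I$ est un domaine (sinon $r,s\notin I$ avec $rs\in I$ donnerait $M=M\cdot r\subseteq \text{ann}_M(s)=0$, absurde); (ii) le générateur $r_0$ de $I$ comme idéal à gauche est irréductible dans $R$ (toute décomposition propre fournirait un diviseur de zéro dans $R/I$); (iii) puisque $R$ est principal à gauche, $Rr_0$ est un idéal à gauche maximal, d'où $Ra+Rr_0=R$ pour tout $a\notin I$, ce qui procure un inverse à gauche de $\bar a$ dans $R/I$; (iv) dans un anneau sans zéro-diviseur où chaque élément non nul admet un inverse à gauche, cet inverse est automatiquement bilatère (argument classique), si bien que $R/I$ est un corps et $M$ un $R/I$-espace vectoriel, c'est-à-dire le point 2.

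Pour la réciproque, si $M$ satisfait le point 2, alors $M$ est un espace vectoriel infini sur le corps $R/rR$, donc fortement minimal dans le langage du $R/rR$-module; toute formule $R$-linéaire étant $R/rR$-linéaire modulo l'annulateur, la forte minimalité persiste dans le langage du $R$-module. Si $M$ satisfait le point 1, la description de Smith ramène tout pp-sous-groupe à $\bigcap_i b_i^{-1}(d_iM)$, où chaque facteur vaut $M$ (si $d_i\neq 0$, par divisibilité), $\text{ann}_M(b_i)$ (fini, si $d_i=0$ et $b_i\neq 0$), ou $M$ (si $b_i=0$); une telle intersection est donc finie ou égale à $M$, et Baur-Monk conclut.

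L'étape décisive est le Cas $I\neq 0$ du sens direct, où se conjuguent la bilatéralité de $I$ dans le contexte non-commutatif, l'irréductibilité du générateur $r_0$, et la conversion des inverses à gauche en une structure de corps à part entière. Les autres étapes reposent sur le théorème de Baur-Monk et sur la description explicite des sous-groupes pp de $M$, obtenue dans \cite{Onay2017,Onay2018a}.
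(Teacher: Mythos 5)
Votre stratégie recoupe pour l'essentiel celle du texte : la dichotomie « tout sous-groupe définissable d'un module fortement minimal infini est fini ou égal à $M$ », appliquée à $M.r$ et à $\text{ann}_M(r)$ via l'isomorphisme $M.r\cong M/\text{ann}_M(r)$, est exactement le lemme \ref{frtmin} ; l'analyse de l'idéal bilatère $I$ (générateur irréductible, quotient qui est un corps) correspond au corollaire qui le suit ; et la réciproque repose, comme dans le texte, sur un résultat d'élimination pour les modules divisibles à annulateurs finis. Votre variante pour l'irréductibilité (passer par « $R/I$ est intègre ») est correcte, à un lapsus près : de $rs\in I$ et $r,s\notin I$ on tire $M=M.r\subseteq\text{ann}_M(s)$ qui est \emph{fini} (et non nul), ce qui contredit déjà l'infinité de $M$.

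Le point qui ne passe pas tel quel est votre usage systématique de la structure \emph{à gauche} de $R$. Lorsque $\vfi$ n'est pas un automorphisme, $K[t;\vfi]$ (avec $at=ta^{\vfi}$) est euclidien à droite mais n'est pas principal à gauche ; l'idéal bilatère $I$ est de la forme $r_0R$ et pas nécessairement $Rr_0$, la maximalité se lit sur l'idéal à droite $r_0R$, et l'identité de Bézout fournit des inverses à droite dans $R/I$, que l'argument classique « anneau intègre $+$ inverses unilatéraux » rend bilatères. Tout cela se répare en transposant vos étapes (i)--(iv) du côté droit, ce qui est en substance l'argument du texte. En revanche, la « forme normale de Smith » que vous invoquez pour la réciproque du cas 1 exige des opérations sur les lignes \emph{et} sur les colonnes, donc la principalité des deux côtés : elle n'est pas disponible pour un $\vfi$ non surjectif, et ce n'est pas ce qu'établissent \cite{Onay2017} et \cite{Onay2018a}. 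La réduction de Baur--Monk seule ne ramène pas les sous-groupes pp en une variable à la forme $\bigcap_i \{x\in M \tq x.b_i \in M.d_i\}$ sans ce travail ; il faut ici citer (ou redémontrer) la proposition 2.15 de \cite{Onay2017}, qui donne directement que toute partie définissable d'un module divisible à annulateurs finis est finie ou cofinie, et c'est ce que fait le texte.
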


\begin{thm}\label{thmCmintrivi}
Soit $(M,v)$ un module $K$-trivialement valué infini et non-trivialement valué.  Alors
$(M,v)$ est $C$-minimal si et seulement si $v(M)$ est $o$-minimal dans le langage $L'_V$, pour tout $r \in R\setminus\{0\}$, $\text{ann}_M(r)$ est fini, $(M,v)$ est henselien et si l'une des conditions ci-dessous est vérifiée;
\begin{enumerate}[$1.$]
\item $(M,v)$ est divisible,
\item  $M_{>\theta}.t$ contient une boule d'indice fini et $M=M_{tor}\oplus M_{>\theta}$, où $M_{tor}$ est divisible et fortement minimal s'il n'est pas fini.  
\end{enumerate}
\end{thm}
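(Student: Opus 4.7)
The plan is to prove the two implications separately, relying on the quantifier-elimination apparatus developed in \cite{Onay2017,Onay2018a} and on a fine analysis of the annihilator ideals, the henselianity property, and the torsion decomposition. I would first single out a canonical form for definable sets in a henselian $R$-module with the listed properties, and then show that this canonical form reduces to Boolean combinations of balls exactly when the stated hypotheses hold.

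For the necessity direction, each individual condition should be forced by exhibiting a definable set that obstructs $C$-minimality otherwise. Concretely: (i) since $v(M)$ is definable (as a quotient), a non-$o$-minimal subset of $v(M)$ in $L'_V$ would pull back to a definable subset of $M$ that is not a Boolean combination of balls; (ii) if $\ann_M(r)$ were infinite for some $r\in R\setminus\{0\}$, then either $\ann_M(r)$ itself or a translate would give an infinite definable subgroup which, having no finite support of centers, cannot be a Boolean combination of balls; (iii) failure of henselianity yields, via the usual approximate-root construction, a proper non-empty definable subset of some closed ball that is not itself swept out by balls; (iv) the dichotomy between divisibility and the splitting $M = M_{tor}\oplus M_{>\theta}$ arises from analysing the descending chain of images $M.t^n$, together with an application of Theorem \ref{thmfrtmin} to the trivially valued part $M_{tor}$.

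For the sufficiency direction, I would proceed via quantifier elimination. Starting from the relative QE for henselian valued $R$-modules in \cite{Onay2018a}, I reduce every formula $\varphi(x,\bar a)$ to a Boolean combination of atomic conditions of the form $v(p(x,\bar a))\,\square\,\gamma$ with $p\in R[y]$ and $\square\in\{<,\leq,=\}$. The finiteness of $\ann_M(r)$ ensures that the map $x\mapsto p(x,\bar a)$ is finite-to-one, so the fibres of each such valuation condition are controlled by a finite set of centres, and henselianity together with the hypothesis that $M_{>\theta}.t$ contains a ball of finite index guarantees that each level set is open--closed ball-like. Finally the $o$-minimality of $v(M)$ collapses the remaining conditions on $\gamma$ to Boolean combinations of intervals, which translate back to Boolean combinations of balls in $M$.

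The main obstacle is the second case of the dichotomy, in which the module splits as $M_{tor}\oplus M_{>\theta}$: the skew action of $t$ via $\varphi$ couples the strongly minimal torsion part to the valuation-theoretic part in a non-diagonal way, so one must verify that the coordinate-wise definable sets do recombine correctly. In particular, showing that $M_{>\theta}.t$ contains a ball of finite index is a disguised henselianity-type hypothesis which is both necessary --- it allows Hensel-type lifting for $R$-polynomial equations mixing torsion and non-torsion data --- and sufficient to make the quantifier elimination go through. I expect the bulk of the technical work to concentrate there, whereas the divisible case will follow more directly from the machinery already in place in \cite{Onay2018a}.
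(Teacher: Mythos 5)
Your overall architecture (necessity by exhibiting definable obstructions, sufficiency by quantifier elimination) matches the paper's in spirit, but two of your key steps would not go through as written. First, on the necessity side, your argument for the finiteness of $\text{ann}_M(r)$ is not valid: in a $C$-minimal valued group \emph{every} infinite proper definable subgroup contains a ball of finite index (Proposition \ref{c-min_d'indice_fini}), hence \emph{is} a finite union of cosets of a ball, so ``having no finite support of centers'' is never an obstruction. The actual argument must exploit that torsion elements all sit at valuation $\theta$: if $\text{ann}_M(r)$ were infinite, the finite-index ball it contains would force $M_{>\theta}=0$ and $\text{ann}_M(r)=M_{\geq\theta}=M_{tor}$; since the valuation is non-trivial there is then an element of valuation $<\theta$, which (by the lemma that such an element forces divisibility of $M$, because $v(M.q)$ would otherwise have a least element) makes $M_{tor}$ divisible, contradicting that it is killed by a single $r$. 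That same lemma --- an element of valuation $<\theta$ forces divisibility --- is also the linchpin of the dichotomy $1$ versus $2$, and it is absent from your sketch; the ``descending chain of images $M.t^n$'' is not the mechanism the splitting rests on (the paper gets the splitting from the exact sequence $0\to M_{tor}\to M\xrightarrow{.(t^k-1)} M_{>\theta}\to 0$, using henselianity).

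Second, and more seriously, your sufficiency argument in case $2$ cannot work as proposed: the quantifier elimination of \cite{Onay2018a} requires the module to be affinely maximal and residually divisible, which fails precisely when $M$ is not divisible, so the QE does not ``go through'' on $M$ itself no matter how the finite-index-ball hypothesis is interpreted. The paper's device is to transfer the problem to the divisible closure $N$ of $M_{>\theta}$: one extends the valuation to $N$ (via $\bar v(x)=(v(x.t^n),n)$ for $n$ minimal with $x.t^n\in M_{>\theta}$), uses the hypothesis that $M_{>\theta}.t$ contains a ball $B$ of finite index to show that $B$ is already a ball of $N$ and hence that $M_{>\theta}$ is a \emph{finite union of balls of $N$}, and then must separately verify that $\bar v(N)=\bigcup_i\tau^{-i}v(B)$ is still $o$-minimal (this needs the dense/discrete dichotomy for $o$-minimal orders and the results of Pillay--Steinhorn and Maalouf). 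Only then does the divisible case apply to $N\oplus M_{tor}$, and $C$-minimality of $M$ follows because it is a finite union of balls inside a $C$-minimal structure. Without this passage to the divisible closure and the $o$-minimality of the enlarged value chain, the proof of case $2$ is missing its essential content.
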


\begin{thm}\label{thmKnontrivi}
Soit $(M,v)$ un module  valué  non $K$-trivialement. Alors
il est $C$-minimal si et seulement s'il est affinement maximal, résiduellement divisible et tel que $v(M)$ soit une $L_V$-structure $o$-minimale. 
\end{thm}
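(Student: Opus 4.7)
The plan is to establish the two implications separately. For necessity, suppose $(M,v)$ is $C$-minimal. The $o$-minimality of $v(M)$ in $L_V$ follows because every $L_V$-definable subset of $v(M)$ lifts, by pullback through $v$ on a fixed closed ball, to a definable subset of $M$; by $C$-minimality this is a finite boolean combination of balls, whose image under $v$ is a finite union of $L_V$-intervals. Residual divisibility is obtained by contraposition: failure of divisibility in the residue, combined with the action of some $r\in R$, yields a definable coset inside a residue ball that cannot be written as a boolean combination of balls of $M$. Affine maximality follows from a pseudo-Cauchy argument: an unfilled affine-pseudo-limit produces a definable intersection of a strictly decreasing chain of balls which, by $C$-minimality, would have to collapse to a finite boolean combination of internal balls, a contradiction.

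For sufficiency, which is the substantive direction, I assume the three conditions and aim to show that every formula $\phi(x)$ with parameters in $M$ defines a boolean combination of balls. The strategy is a suitable quantifier elimination in the multisorted language with sorts for $M$, $v(M)$, and the residue module, building on the results of \cite{Onay2017} and \cite{Onay2018a}. The three hypotheses play complementary roles: $o$-minimality of $v(M)$ governs conditions on the valuations of $R$-linear terms in $x$; residual divisibility ensures that the residue sort itself presents no obstruction to elimination; and affine maximality allows one to solve, or else to locate on a single ball, each affine equation $x\cdot r = a$ and its valuational version $v(x\cdot r - a) = \gamma$ that arises from the skew action of $R$.

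The principal obstacle lies in controlling the action of $t$ on balls given the commutation rule $at = ta^{\vfi}$ and the non-$K$-triviality of $v$. Right multiplication by $t$ is generally not an isometry, and the valuation shift it induces is governed by the interaction of $\vfi$ with $v$, which must be tracked uniformly across balls. This is where affine maximality becomes decisive: combined with $o$-minimality of $v(M)$, it permits a descent on the skew-degree of the polynomial operator applied to $x$, reducing each condition $v\bigl(\sum_i x\cdot t^i\cdot a_i\bigr) = \gamma$ to a boolean combination of conditions $v(x-c)\in I$ for constants $c$ arising from the solution of lower-degree instances. An induction on formula complexity then yields $C$-minimality, in direct analogy with the classical treatment of algebraically maximal valued fields.
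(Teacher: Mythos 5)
Your overall architecture (two separate implications; relative quantifier elimination for sufficiency; structural consequences of $C$-minimality for necessity) matches the paper's, but two of your key steps in the necessity direction rest on ideas that do not work, and the sufficiency direction omits the one reduction that actually carries the proof. For affine maximality you propose a pseudo-Cauchy argument: an unfilled affine pseudo-limit yields a strictly decreasing chain of balls whose intersection is then controlled by $C$-minimality. That is the argument for spherical completeness, which $C$-minimality does not give (the algebraic closure of $\Q_p$ is $C$-minimal but not maximal); the intersection of a strictly decreasing chain of balls is in general only type-definable, and even for a definable chain the conclusion you would need is that the intersection is nonempty, not that it fails to be a boolean combination of balls. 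The paper argues quite differently: if affine maximality fails, the Théorème 3.35 de \cite{Onay2018a} produces $r$ et $y$ tels que toute solution de $x.r=y$ est irrégulière pour $r$; en prenant une solution de valuation maximale $\gamma$ on obtient $y\in M_{>\gamma\cdot r}\setminus (M_{>\gamma}).r$, et la contradiction vient de la finitude de l'ensemble des sauts $Saut_M(r)$: $\gamma$ est limite de valeurs $\delta$ hors sauts, pour lesquelles $M_{>\delta}.r$ et $M_{>\delta\cdot r}$ déterminent le même segment final, ce qui force $M_{>\gamma\cdot r}=(M_{>\gamma}).r$. De même, votre argument pour la divisibilité résiduelle (``un coset définissable qui ne peut être combinaison booléenne de boules'') n'identifie pas une vraie obstruction: dans un groupe valué $C$-minimal tout sous-groupe définissable infini est réunion finie de cosets d'une boule, et les cosets de boules sont des boules. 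Le papier observe plutôt que la non-$K$-trivialité rend $v(M)$ non minoré, donc $v(M.r)$ non minoré, de sorte que $M.r$ ne peut contenir de boule d'indice fini et vaut $M$; le même argument donne la finitude de $\text{ann}_M(r)$ --- condition que vous ne mentionnez jamais, bien qu'elle soit indispensable dans l'autre sens (c'est elle qui rend finie ou cofinie la partie sans quantificateur de la formule).

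Pour la suffisance, votre ``descente sur le degré tordu'' est remplacée dans le papier par une citation de l'élimination relative (Théorème 4.6 de \cite{Onay2018a}), suivie de l'unique étape réellement nouvelle: un recouvrement fini de $M$ par des combinaisons booléennes de boules sur chacune desquelles $v(x.r)=v(x-d)\cdot r$ pour un $d$ fixé. Ceci repose sur le critère de régularité: $x-c_0$ est régulier pour $r$ exactement lorsque $v(x-c_0)=\max\{v(x-c_0-c)\;:\;c\in\text{ann}_M(r)\}$, condition qui est visiblement une combinaison booléenne de boules. Sans cette décomposition par lieux de régularité (ou un substitut précis), votre réduction de $v(x\cdot r-a)$ à des conditions sur $v(x-c)$ n'est pas justifiée; telle quelle, la proposition reste un plan avec des lacunes essentielles aux deux endroits où le théorème se joue.
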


Cet article est organiz\'ee comme suit. La section 2 rappelle les d\'efinitions introduites dans \cite{Onay2017} et \cite{Onay2018a}. La section 3,5,6 tra\^itent respectivement les cas d'une valuation triviale, d'une valuation non-triviale et $K$-triviale et d'une valuation non $K$-triviale. La section 4
s'agit d'une interlude des r\'esultats g\'en\'eraux sur la $C$-minimialit\'e utilis\'es dans les sections 5 et 6.
\section{Pr\'eliminaires}

Soit $K$ un corps infini, $\vfi$ un endomorphisme
de $K$, et $R:=K[t,\vfi]$. On renvoi le lecteur \`a la section 2 de \cite{Onay2017} pour les g\'en\'eralit\'es sur l'anneau $R$ et les $R$-modules.

Rappelons les faits suivants.

On appelle polyn\^ome un \'el\'ement de  $R$ et donc un mon\^ome  un term de la forme $t^na$, o\`u $a\in K$ et  $n\in \mathbb{N}$.  Cet anneau est euclidien \`a droite, en conséquence tout $r \in R$ s'\'ecrit comme 
$$r=t^ns_1\ldots s_1,$$ o\`u les $s_i$ sont irr\'educibles et s\'eparables (i.e., non divible par $t$). Il s'en suit l'existence du $\ppcm(r,q)$ pour $r,q \in R$. Il vient aussi qu'un $R$-module est divisible si et seulement 
s'il l'est par $t$ et par tout $s$ s\'eparable. 
Par un $R$-module, on entendra un $R$-module \`a droite.
Par $ann_M(r)$ $(r\in R)$ on d\'esigne l'ensemble des \'el\'ements annul\'es par $r$ dans un $R$-module $M$, i.e., l'ensemble 
$$\{x\in M \tq x.r=0\}.$$
 La cl\^oture divisible d'un sous module $A$ dans un $R$-module divisible $M$, est un sous-module $A' \supseteq A$ tel que pour tout $a \in A'$ il existe un $r\in R$, non nul avec $a.r \in A$. Les cl\^otures divisibles de $A$ sont isomorphes au-dessus de $A$ en tant que $R$-modules (cf. Lemma 2.6 (4) \cite{Onay2017} et la discussion suivant la preuve).

  Une $K$-cha\^ine est un ordre lin\'eaire $\Delta$ avec un maximum $\infty$, muni d'une action de $K$ satisfaisant, pour tout $a,b \in K^{\times}$ et $\delta, \gamma \in \Delta\setminus\{\infty\}$, les axiomes suivantes:

  \begin{enumerate}
  	\item $\gamma > \delta \to \gamma \cdot a> \delta \cdot a$,
  	\item $\gamma \cdot ab = (\gamma \cdot a) \cdot b$,
  	\item $\gamma \cdot a > \gamma \to \delta 
  	\cdot a > \delta$,
  	\item $\gamma \cdot (a \pm b)\geq 
  	\min\{\gamma\cdot a, \gamma \cdot b\}$,
  	\item $\gamma \cdot 0= \infty \; ;
  	\gamma \cdot 1 =\gamma\; ; \infty\cdot b=\infty$.
  \end{enumerate}
  
 Cet action induit une valuation $v_K$ sur $K$ (cf. \cite{Onay2018a} Proposition 2.4). 
 \begin{defn}
 	On dit que cette action est $K$-triviale si $v_K$ est la valuation triviale, sinon on dit qu'elle est $K$-non-triviale. 
 \end{defn}
 
 \begin{defn}[$R$-chains]\label{rchains}
 	Une $R$-cha\^ine  $(\Delta, <, \infty, \cdot r_{r \in R})$ est une  $K$-cha\^ine telle que
 	\begin{enumerate}
 		\item $\cdot t$ is strictement croissant $\Delta \setminus \{\infty\}$, et $\infty \cdot t=\infty$,
 		\item $\gamma\cdot ta=(\gamma \cdot t)\cdot a$ et $\gamma\cdot t^na = (\gamma \cdot t^{n-1})\cdot ta$ pour tout $\gamma \in \Delta$, et $a\in K$,
 		\item $\gamma \cdot r = \min_{i} \{\gamma \cdot \mathbf{m}_i\}$, pour tout $r \in R$ non nul, o\`u
 		les $\mathbf{m}_i$ sont les mon\^omes de $r$ et pour tout $\gamma \in \Delta$,
 		\item $\gamma \cdot \mathbf{m}_1 \leq \gamma \cdot \mathbf{m}_2 \to \left( 
 		\forall \delta \mesp (\delta <\gamma \to \delta \cdot 
 		\mathbf{m}_1 < \delta \cdot \mathbf{m}_2)\right)$ pout tout mon\^omes 
 		$\mathbf{m}_1, \mathbf{m}_2 \in R$ tels que $0\leq\deg(\mathbf{m}_2)<\deg(\mathbf{m}_1)$,
 		et pour tout $\gamma
 		\neq \infty$.
 	\end{enumerate}
 \end{defn}

 \begin{defn} Un $R$-module valu\'e est un $R$-module, qui est de plus un groupe ab\'elien valu\'e 
 	$(M,\Delta,v)$ (en particulier $v:M\to \Delta$ est surjective),  o\`u $\Delta$ est une $R$-cha\^ine
 	telle que pour tout $x\in M$, l'on ait
 	\begin{enumerate}
 		\item $v(x.a)=v(x)\cdot a$, for all $a \in K$,
 		\item  $v(x.t)=v(x)\cdot t.$
 	\end{enumerate}
Un $R$-module valu\'e est  dit $K$-trivialement  valu\'e (respectivement $K$-non-trivialement valu\'e) si l'action de $K$ sur $\Delta$ est $K$-triviale (respectivement $K$-non-triviale), i.e., $v(x.a)=v(x)$ pour tout $x\in M$ et $a\in K$. Enfin, $(M,\Delta,v)$ est dit trivialement valu\'e si $\vert v(M) \vert \leq 2$.  
 \end{defn}
 
Il est clair que la d\'efinition ci-dessus d'un $R$-module $K$-trivialement valu\'e co\"incide bien avec la celle donn\'ee dans \cite{Onay2017} (cf. la d\'efinition 3.4). En particulier l'application $x \mapsto x.t$ est injective dans tout $R$-module valu\'e.
 
Soit $(M,\Delta,v)$ un $R$-module valu\'e. 
\begin{nota}
	On rappelle que l'on note  $\theta$ la coupure de $\Delta$, d\'efinie par l'axiome (4) en prenant $\mathbf{m}_1=t$ et $\mathbf{m}_2=1$. Par cons\'equent, on note, dans un $R$-module valu\'e $(M,\ldots)$,  ind\'ependemment du fait que cette coupure soit realis\'ee dans $\Delta$;
	\begin{itemize}
		\item $M_{>\theta}:=\{x\in M \tq v(x.t)>v(x)\}$
		\item $M_{\geq \theta}:=\{x\in M \tq v(x.t)\geq v(x)\}$.
	\end{itemize}
\end{nota} 

Par ailleurs, tout $\gamma \in v(M)\setminus\{\theta, \infty\}$, on note $M_{\geq \gamma}$ (respectivement $M_{>\gamma}$) la boule ferm\'ee (respectiement la boule ouverte) centr\'ee  $0$ et de rayon $\gamma$.
Notez  que si la sous-module de torsion $M_{tor}$  est non triviale alors $v(M_{tor})=\{\theta, \infty\}$ est un sous $R$-module trivialement valu\'e (cf. Lemma 3.10 (3) \cite{Onay2017}). 

\begin{nota}
	Comme dans une $R$-cha\^ine munie d'une action $K$-triviale,
	l'application $\gamma \mapsto  \gamma \cdot r$  est \'egale \`a $\gamma \mapsto \gamma \cdot t^k$, pour un certain $k \in \mathbb{N}$, suivant \cite{Onay2017}, on notera parfois $\tau$,  l'application $\gamma \mapsto   \gamma \cdot t$.  
\end{nota}

\begin{defn}[cf. Definition 3.23 \cite{Onay2017}]
	Soit $(M,\Delta,v)$ un $R$-module $K$-trivialement valu\'e. On dit qu'il est hens\'elien si pour tout $s$ s\'eparable, $x\mapsto x.s$ est une bijection de $M_{>\theta}$ (autrement dit, le sous-module $M_{>\theta}$ est divisible par les polyn\^omes s\'eparables).
\end{defn} 

On peut prendre l'\'enonc\'e ci-dessous comme la d\'efition d'un $R$-module {\it r\'esiduellement divisible et affinement maximal}. 
 
\begin{fait}[reformulation du  Th\'eor\`eme 3.35, \cite{Onay2018a}] $(M,\Delta,v)$
	est r\'esiduelement divisible et affinement maximal si et seulement si pour tout non z\'ero $r\in R$ pour tout $z\in M$, il y a un $y \in M$ tel que 
	\begin{itemize}
		\item $y.r=z$,
		\item $v(y.r)=v(y)\cdot r$.
	\end{itemize}	
\end{fait}

\begin{defn}
	Un \'el\'ement $y \in M$, tel que $v(y.r)=v(y)\cdot r$, est appell\'e {\it r\'egulier} pour $r$.
\end{defn}
 Rappelons que $L_V=\{<,(\cdot r)_{r \in R},
 \infty\}$, dit, le langage des $R$-cha\^ines et $L_V'=L_V\cup\{(R_n)_{n \in \N} \}$, o\`u  les $R_n$ sont des pr\'edicats unaires telles que,  $M \models R_n(\gamma)$ si et seulement si, $\vert M_{\geq \gamma}/M_{>\gamma} \vert\geq n$.
Par la suite, on notera $(M,v)$ pour d\'esigner un $R$-module valu\'e.
\section{Cas fortement minimal}

On consid\`ere les $R$-modules trivialement valu\'es. Notez qu'un $R$-module trivialement valu\'e est $C$-minimale si et seulement s'il est fortement minimal.
\begin{lem}\label{frtmin} 
Si $M$ est un $R$-module fortement minimal infini alors, pour tout $r \in R \setminus \{0\}$,
$${ann}_M(r) \mesp \text{fini} \mesp \ssi (M.r \neq 0) \ssi M.r=M.$$
\begin{proof}
Pour tout $r \in R\setminus\{0\}$, ${ann}_M(r)$ et $M.r$, étant définissables, sont finis ou cofinis; de plus, \'{e}tant des sous-groupes, ils ne peuvent être cofinis propres. Si tous les deux sont finis, en considérant l'application
$M \to M, x \mapsto x.r$, qui a comme noyau $ann_M(r)$, on devrait avoir $M$ fini. Donc la seule possibilité restante est celle qui est énoncée.
\end{proof}
\end{lem}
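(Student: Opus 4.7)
The plan is to exploit the strong minimality of $M$ together with the fact that both sets involved are definable subgroups. First I would note that $\text{ann}_M(r) = \{x \in M \tq x.r = 0\}$ is defined by an atomic formula, and $M.r = \{x.r \tq x \in M\}$ is defined by an existential formula ($\exists y \mesp x = y.r$); both are subgroups of $(M,+)$, the first being the kernel and the second the image of the $\Z$-module endomorphism $\mu_r : M \to M$, $x \mapsto x.r$.

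Next I would apply strong minimality: any definable subset of $M$ is finite or cofinite. The key observation is that a subgroup $H \le M$ cannot be cofinite proper: the cosets of $H$ partition $M$ into $[M:H]$ pieces each of the same cardinality $|H|$, so if $M \setminus H$ is finite and nonempty then $H$ itself is finite, hence $M$ is finite, contradicting the hypothesis. Therefore $\text{ann}_M(r)$ and $M.r$ are each either finite or equal to $M$.

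Then I would invoke the first isomorphism theorem for the endomorphism $\mu_r$: $M/\text{ann}_M(r) \cong M.r$. If $\text{ann}_M(r)$ is finite, then since $M$ is infinite the quotient $M/\text{ann}_M(r)$ is infinite, so $M.r$ is infinite, hence by the previous step $M.r = M$; in particular $M.r \neq 0$. The implication $M.r = M \Rightarrow M.r \neq 0$ is immediate since $M$ is infinite. Finally, if $M.r \neq 0$, then $M.r$ is a nonzero subgroup so it cannot be finite: otherwise $\text{ann}_M(r)$ would have finite index in $M$ and, being a subgroup, would have to equal $M$, which would force $M.r = 0$, a contradiction. Thus $M.r = M$ and, via the isomorphism again, $\text{ann}_M(r)$ is finite.

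I do not expect any real obstacle; the only mildly delicate point is ruling out proper cofinite subgroups, after which the three equivalent conditions fall out by a single pass through the isomorphism $M/\text{ann}_M(r) \cong M.r$ combined with the dichotomy ``finite or all of $M$.''
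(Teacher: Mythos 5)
Your proof is correct and follows essentially the same route as the paper: strong minimality makes both definable subgroups finite or cofinite, proper cofinite subgroups are impossible, and the kernel--image relation for $x\mapsto x.r$ rules out both being finite. The paper's version is just a more compressed form of the same argument.
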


\begin{lem}\label{generateurI}
Soit $I$ un idéal bilatère non nul de $R$. Si $\vfi^n=1$ pour un certain $n \in \N$, alors il existe $q\in R$ à coefficients dans $Fix(\vfi)$  tel que $I=t^nqR$. Si $\vfi$ n'est pas
d'ordre fini, alors $I$ est de la forme $t^kR$ pour un certain $k \in \N$.
 
\begin{proof}
Voir la proposition 2.2.8 dans \cite{cohn}.
\end{proof}
\end{lem}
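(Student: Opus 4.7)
The plan is to exploit that $R$ is right-Euclidean (as recalled in Section 2), so that every right ideal of $R$ is principal. In particular the two-sided ideal $I$ has the form $I = pR$ for some nonzero $p \in R$; the two-sided condition $RI \subseteq I$ then reduces to $Rp \subseteq pR$, and since $R$ is generated as a ring by $K$ and $t$, it suffices to verify $bp \in pR$ for every $b \in K^{\times}$ together with $tp \in pR$.

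First I would normalize $p$. Factoring out the largest power of $t$ dividing $p$ on the left, write $p = t^k p'$ with $p' = \sum_{i=0}^N t^i a_i$ of nonzero constant term $a_0$; right-multiplication by $a_0^{-1} \in K^{\times}$ does not alter the right ideal generated by $p'$, so I may assume $a_0 = 1$. Thus $I = t^k p' R$, and the conditions on $p$ translate into analogous conditions on $p'$.

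The heart of the argument is an explicit coefficient computation. Writing $bp' = p' q_b$ with $q_b \in K$ (forced by degree), expanding via the commutation rule $at^j = t^j a^{\vfi^j}$ and using commutativity of $K$ to cancel the $a_i$'s, one reads off from the leading term that $q_b = b^{\vfi^N}$ and, from the coefficient of $t^i$ for every $i$ in the support of $p'$, the relation $b^{\vfi^i} = b^{\vfi^N}$, that is $\vfi^i = \vfi^N$ on $K$. Applied to $i = 0$ (where $a_0 = 1 \neq 0$), this gives $\vfi^N = \id$ on $K$. A parallel analysis of $tp' = p'(c_0 + t c_1)$ shows, by matching the constant term and then the coefficient of $t^1$ (using $a_0 = 1$), that $c_0 = 0$ and $c_1 = 1$; the coefficient of $t^{i+1}$ for each other $i$ in the support then gives $a_i^{\vfi} = a_i$, i.e.\ $a_i \in \Fix(\vfi)$.

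The two cases of the statement follow at once. If $\vfi$ has infinite order, the forced equality $\vfi^N = \id$ makes $N = 0$, so $p' = 1$ and $I = t^k R$. If $\vfi^n = \id$ with $n$ minimal, then $\vfi^i = \id$ for every $i$ in the support of $p'$ forces $n \mid i$, so $p' \in \Fix(\vfi)[t^n]$ and $I = t^k q R$ with $q := p'$. The main technical obstacle is the bookkeeping of coefficients through the twisted multiplication; the normalization $a_0 = 1$ is what makes the $K$-equations yield $\vfi^N = \id$ directly, and what pins down $c_0$ and $c_1$ in the $t$-equation so that the remaining constraints cleanly collapse to $a_i \in \Fix(\vfi)$.
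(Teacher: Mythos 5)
Your proof is correct, and it takes a genuinely different route from the paper's, which gives no argument at all but simply cites Proposition 2.2.8 of Cohn: you supply a self-contained computational proof. The skeleton is sound --- right-Euclideanity gives $I=pR$, the normalization $p=t^kp'$ with constant term $1$, and testing two-sidedness only on the ring generators $K$ and $t$ --- and the coefficient bookkeeping checks out: the scalar condition forces $\vfi^i=\id$ on $K$ for every $i$ in the support of $p'$, and the $t$-condition forces $a_i\in \Fix(\vfi)$, from which both cases of the statement follow. Two small points deserve a word. First, when you ``translate the conditions on $p$ into analogous conditions on $p'$'', the condition $bp\in pR$ actually becomes $b^{\vfi^k}p'\in p'R$, so a priori your coefficient identities only hold for scalars in the image of $\vfi^k$ --- relevant when $\vfi$ is not surjective, which the setting allows. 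This is harmless: you get $\vfi^{k+i}=\vfi^{k}$ on $K$, and injectivity of the field endomorphism $\vfi$ recovers $\vfi^i=\id$; but it should be said (injectivity is also what guarantees that degrees add under left multiplication, which you use to force $\deg q_b=0$). Second, what you actually prove is $I=t^kqR$ for some $k$, with $q$ having coefficients in $\Fix(\vfi)$ and supported on powers of $t^n$; the lemma's ``$t^nqR$'' with the same $n$ as the order of $\vfi$ cannot be meant literally (it would already fail for $I=R$), so your conclusion is the intended --- indeed slightly more precise --- statement. Compared with the bare citation, your argument has the merit of making visible exactly where the two cases (finite versus infinite order of $\vfi$) diverge.
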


\begin{corr}
Soit $(M,v)$ un $R$-module   trivialement valué, infini et fortement minimal. Soit $I=\{r \in R \tq M.r=0\}$. Alors, ou bien  $I=\{0\}$ et dans ce cas $M$ est divisible et les annulateurs 
$\text{ann}_M(r)$ sont finis, ou bien $I\neq \{0\}$ et dans ce cas, $I=rR$ pour un $r\in R$ irr\'eductible et $M$ est un $R/rR$-espace vectoriel.
\begin{proof}
Le cas où $I=\{0\}$ découle du lemme \ref{frtmin}. 
Supposons donc $I\neq \{0\}$. Remarquons que $I$ est un idéal bilatère et que,
par le lemme \ref{generateurI},  il existe
$r \in K_0[t;\vfi]$, o\`u $K_0=Fix(\vfi)$), tel que $I=rR$. Soit $d$ le degré de $r$. Alors pour tout $q$ de degré $<d$ vérifiant $r=qs$ on a $M.q \neq 0$ d'où par \ref{frtmin}, $M.q=M$. Par conséquent $M.r=M.qs=M.s=0$, d'où $s \in I$. Donc
$q \in K$. C'est-à-dire que $r$ est irr\'eductible. Par l'existence du ppcm, et par le théorème de Bézout, le quotient $R/rR$ est un corps.
\end{proof}
\end{corr}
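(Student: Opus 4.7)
Je proc\`ede par disjonction selon que $I$ est nul ou non.

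Cas $I=\{0\}$: pour chaque $r\in R\setminus\{0\}$ on a $M.r\neq 0$ par d\'efinition de $I$, donc le lemme~\ref{frtmin} donne simultan\'ement $M.r=M$ (divisibilit\'e par $r$) et $\text{ann}_M(r)$ fini. Ceci correspond \`a la premi\`ere alternative de la conclusion.

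Cas $I\neq\{0\}$: l'ensemble $I$ est clairement un id\'eal bilat\`ere de $R$. Le lemme~\ref{generateurI} fournit donc un g\'en\'erateur $r$ de $I$, et par euclidianit\'e \`a droite son degr\'e $d$ est aussi le degr\'e minimal d'un \'el\'ement non nul de $I$. Pour montrer que $r$ est irr\'eductible, j'\'ecris $r=qs$ avec $0\leq\deg q<d$. La minimalit\'e de $d$ donne $q\notin I$, donc $M.q\neq 0$, d'o\`u $M.q=M$ par le lemme~\ref{frtmin}. Il vient $0=M.r=(M.q).s=M.s$, donc $s\in I=rR$; la relation $\deg s\geq d$ jointe \`a $\deg q+\deg s=d$ force alors $\deg q=0$, soit $q\in K^{\times}$. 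Ainsi $r$ n'admet aucun facteur gauche non trivial, i.e., est irr\'eductible.

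Pour conclure que $R/rR$ est un corps (\'eventuellement gauche), j'invoquerais l'existence du ppcm dans $R$: pour tout $s\in R\setminus rR$, l'irr\'eductibilit\'e de $r$ entra\^ine $\pgcd(r,s)\in K^{\times}$, d'o\`u par B\'ezout un inverse de $s$ modulo $rR$. Comme $M$ est annul\'e par $r$, sa structure de $R$-module descend alors canoniquement en une structure de $R/rR$-espace vectoriel.

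L'obstacle principal me semble \^etre la gestion de la non-commutativit\'e: il faut exploiter le caract\`ere euclidien \`a droite de $R$, bien distinguer les factorisations \`a gauche et \`a droite, et faire appel au lemme~\ref{generateurI} pour garantir que l'id\'eal bilat\`ere $I$ admet un g\'en\'erateur principal.
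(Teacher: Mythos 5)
Votre preuve est correcte et suit essentiellement le m\^eme chemin que celle de l'article: lemme~\ref{frtmin} pour le cas $I=\{0\}$, puis lemme~\ref{generateurI} pour obtenir le g\'en\'erateur $r$, l'argument $M.q=M\Rightarrow M.s=0\Rightarrow s\in I$ pour l'irr\'eductibilit\'e, et enfin ppcm et B\'ezout pour conclure que $R/rR$ est un corps. Vos pr\'ecisions suppl\'ementaires (additivit\'e du degr\'e pour forcer $\deg q=0$, descente de la structure de module) explicitent utilement des points laiss\'es implicites dans l'article.
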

\begin{lem}\label{excorr4.6}
 Si $M$ est un $R$-module fortement minimal, si l'action de $t$ est injective et si $\vfi$ n'est pas d'ordre fini, alors l'idéal $I$ ci-dessus est nul.
\begin{proof}
Si $\vfi$ n'est pas d'ordre fini et si $I\neq \{0\}$, par le lemme \ref{generateurI}, $I$ est engendré par $t^k$, pour un certain $k \in \N_{>0}$. Puisque la multiplication par $t$ est injective ceci n'est pas possible. 
\end{proof}
\end{lem}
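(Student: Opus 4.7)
La strat\'egie est de raisonner par l'absurde en combinant la classification du Lemme \ref{generateurI} avec l'hypoth\`ese d'injectivit\'e de la multiplication par $t$. Supposons donc $I\neq \{0\}$. Puisque $I$ est un id\'eal bilat\`ere de $R$ et que $\vfi$ n'est pas d'ordre fini, le Lemme \ref{generateurI} fournit un $k\in \N$ tel que $I=t^k R$.

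On exclut d'abord $k=0$: sinon $I=R$ contiendrait $1$, d'o\`u $M=M\cdot 1=\{0\}$, en contradiction avec l'infinit\'e de $M$ (un module fortement minimal \'etant infini par d\'efinition). On a donc $k\geq 1$, et l'inclusion $t^k\in I$ entra\^ine $M\cdot t^k=0$. Or l'application $x\mapsto x\cdot t^k$, qui n'est autre que la $k$-i\`eme it\'er\'ee de $x\mapsto x\cdot t$, est injective par hypoth\`ese, ce qui force $M=\{0\}$, nouvelle contradiction.

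L'unique ingr\'edient non trivial est le Lemme \ref{generateurI}, dont la description pr\'ecise des id\'eaux bilat\`eres de $R$ dans le cas o\`u $\vfi$ est d'ordre infini fait tout le travail structurel. Une fois cet \'enonc\'e en main, la conjonction avec l'injectivit\'e de $\cdot t$ rend la conclusion imm\'ediate; aucun obstacle s\'erieux ne se pr\'esente, la preuve se r\'eduisant essentiellement \`a l'assemblage des deux hypoth\`eses et au petit soin de ne pas oublier le cas d\'eg\'en\'er\'e $k=0$.
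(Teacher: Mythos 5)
Votre preuve est correcte et suit exactement la même démarche que celle de l'article : appliquer le lemme \ref{generateurI} pour obtenir $I=t^kR$, puis contredire l'injectivité de la multiplication par $t$. Le seul ajout est l'exclusion explicite du cas $k=0$, que l'article absorbe en affirmant directement $k\in\N_{>0}$; c'est un soin bienvenu mais cela ne change rien à l'argument.
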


{\bf Preuve du théorème \ref{thmfrtmin}}.
Par ce qui précède il suffit de montrer qu'un $R$-module satisfaisant $1$ ou 
$2$ est fortement minimal. Si $M$  est un $R$-module satisfaisant $1$,  alors,  par la proposition 2.15 \cite{Onay2017}, toute partie définissable de $M$ est finie ou cofinie. Si $M$ satisfait $2$, alors 
la structure de $R$-module  sur $M$ coïncide avec sa structure de $R/rR$-espace vectoriel. Or tout espace vectoriel est fortement minimal.

\section{Interlude: conséquences de la $C$-minimalité}

Rappelons que dans un groupe (ou module )valu\'e $G$, on note $G_{\geq \gamma}$ (respectivement $G_{>\gamma}$ la boule ferm\'ee de centre $0$ et de rayon $\gamma$ (respectivement la boule ouverte de centre $0$ et de rayon $\gamma$). On utilisera très fréquemment la proposition ci-dessous.
\begin{prop}\label{c-min_d'indice_fini}
Si $G$ est un groupe valué  ${C}$-minimal alors, pour tout sous-groupe  définissable infini propre $F$ de G, il existe $\gamma \in v(G)$, tel que $F$ contient $G_{\geq \gamma}$ (ou $G_{> \gamma}$), et $G_{\geq \gamma}$ (ou $G_{> \gamma}$) est d'indice fini dans $F$.
\begin{proof}
Voir  \cite{Macpherson1996} ou \cite{Simonetta2001}  1.6(ii).
\end{proof}
\end{prop}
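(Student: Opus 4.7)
La strat\'egie est celle classique pour les structures $C$-minimales en pr\'esence d'une ultram\'etrique: je combinerais la d\'ecomposition de toute partie d\'efinissable en r\'eunion finie de \og swiss cheeses \fg{} $B\setminus\bigcup_i B_i$ (cons\'equence imm\'ediate de la Remarque suivant la d\'efinition de la $C$-minimalit\'e) avec l'invariance par translation qui d\'ecoule de l'hypoth\`ese de sous-groupe.

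Je commencerais par montrer que $F$ contient une boule. \'Ecrivons $F=\bigsqcup_k (B_k\setminus\bigcup_l B_{k,l})$; puisque $F$ est infini, l'un au moins des swiss cheeses l'est, et tout swiss cheese infini contient une boule (choisir un point $y\in B_k$ hors de la r\'eunion finie $\bigcup_l B_{k,l}$, puis une sous-boule de $B_k$ centr\'ee en $y$ de rayon valuatif assez grand pour \'eviter tous les $B_{k,l}$). Pour centrer ensuite cette boule $B_0\subseteq F$ en $0$, il suffit d'observer que pour tout $x\in B_0$, l'hypoth\`ese de sous-groupe donne $B_0-x\subseteq F$, et $B_0-x$ est une boule centr\'ee en $0$, donc de la forme $G_{\geq\gamma}$ ou $G_{>\gamma}$ pour un certain $\gamma\in v(G)$. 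Notons $H$ cette boule, sous-groupe infini de $G$ contenu dans $F$.

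L'\'etape cruciale, et selon moi la seule difficult\'e s\'erieuse, est de montrer que $H$ est d'indice fini dans $F$. Puisque $H\subseteq F$ et $H,F$ sont des sous-groupes, $F$ est r\'eunion de classes modulo $H$. Par l'in\'egalit\'e ultram\'etrique, chaque boule intervenant dans la combinaison bool\'eenne d\'efinissant $F$ est soit contenue dans une unique classe de $H$, soit r\'eunion disjointe de classes de $H$ (lorsqu'elle contient strictement $H$). Si $F/H$ \'etait infini, alors un swiss cheese $B_k\setminus\bigcup_l B_{k,l}$ avec $B_k\supsetneq H$ contribuerait une infinit\'e de classes enti\`eres, les $B_{k,l}$ ne pouvant en retirer qu'un nombre fini. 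Pour conclure, je descendrais au quotient $G/H$, qui porte une $C$-relation h\'erit\'ee, pour y appliquer \`a nouveau la Proposition et obtenir qu'une sous-boule strictement plus grande que $H$ est encore contenue dans $F$; en it\'erant et en utilisant que la complexit\'e bool\'eenne du d\'efinissant de $F$ est finie, la suite des boules ainsi produites stationne et fournit la contradiction cherch\'ee. C'est pr\'ecis\'ement ce pas de descente ou d'it\'eration — le contr\^ole fin de la r\'epartition des boules dans les classes d'une boule fix\'ee — qui constitue la partie technique et qui est trait\'e dans \cite{Macpherson1996} et \cite{Simonetta2001}; les deux premi\`eres \'etapes, en revanche, ne sont que des cons\'equences imm\'ediates de la structure bool\'eenne ultram\'etrique et de la cl\^oture par translation.
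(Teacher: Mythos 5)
La preuve du texte se réduit à un renvoi vers \cite{Macpherson1996} et \cite{Simonetta2001} 1.6(ii); votre stratégie (décomposition en \emph{swiss cheeses}, puis translation pour centrer une boule en $0$) est exactement celle de ces références, et vous renvoyez vous-même à elles pour l'étape décisive. Sur les étapes que vous détaillez, la translation est correcte: si $B_0\subseteq F$ et $x\in B_0$, alors $B_0-x\subseteq F$ est une boule contenant $0$, donc de la forme $G_{\geq\gamma}$ ou $G_{>\gamma}$, et c'est bien un sous-groupe par l'inégalité ultramétrique.

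Deux points sont toutefois en défaut tels quels. Premièrement, la sous-boule de $B_k$ centrée en $y$ et évitant les trous existe bien (prendre $\{z \tq v(z-y)>\max_l v(y-c_{k,l})\}$, où les $c_{k,l}$ sont des centres des trous: elle est disjointe de chaque $B_{k,l}$ par l'inégalité ultramétrique), mais rien ne garantit qu'elle soit \emph{infinie}: elle peut être réduite à $\{y\}$, par exemple si $v(G)\setminus\{\infty\}$ admet un plus grand élément atteint par les $v(y-c_{k,l})$. Un swiss cheese infini d'un groupe valué ne contient donc pas, en général, de boule infinie, et l'infinitude de $H$ demande un argument supplémentaire, qui réutilise le fait que $F$ est un sous-groupe (ou la forte minimalité des quotients $G_{\geq\gamma}/G_{>\gamma}$). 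Deuxièmement, dans l'étape d'indice fini, l'affirmation que les trous $B_{k,l}$ « ne peuvent retirer qu'un nombre fini » de classes de $H$ est fausse: un trou qui contient strictement une classe de $H$ est lui-même réunion, éventuellement infinie, de classes de $H$. La descente au quotient $G/H$ que vous esquissez pour y remédier n'est pas développée au point de constituer une preuve (il faut munir $G/H$ d'une structure adéquate et contrôler la décomposition induite); c'est précisément le contenu technique de \cite{Simonetta2001} 1.6(ii), auquel le texte, comme vous, se contente finalement de renvoyer.
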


\begin{prop}
Soit $\mathbb{G}=(G,v,\dots)$  un groupe valué, porteur de structure additionnelle, et $C$-minimal. Alors \begin{enumerate}[$1.$]
\item pour tout $\gamma \in v(G)$, le groupe $G_{\geq \gamma}/G_{>\gamma}$ 
muni de la structure induite par $\mathbb{G}$ est fortement minimal,
\item la chaîne $v(G)$ munie de la structure induite par $\mathbb{G}$ est
$o$-minimale.
\end{enumerate} 
\begin{proof}
Voir  \cite{Macpherson1996}.
\end{proof}
\end{prop}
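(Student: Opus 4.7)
Le plan est de déduire les deux assertions de la caractérisation explicitée dans la remarque précédente: dans un groupe valué $C$-minimal, tout sous-ensemble $\mathbb{G}$-définissable de $G$ est une combinaison booléenne finie de boules fermées $B(a,\delta)=\{x\tq v(x-a)\geq\delta\}$ et de boules ouvertes $\{x\tq v(x-a)>\delta\}$. Le pivot de l'argument est une dichotomie sur les boules qui découle directement de l'inégalité ultramétrique: une boule $B(a,\delta)$ est soit (i) égale à $G_{\geq\delta}$ elle-même (précisément lorsque $v(a)\geq\delta$, car alors $a\in G_{\geq\delta}$ et $a+G_{\geq\delta}=G_{\geq\delta}$), soit (ii) contenue dans la sphère $\{x\tq v(x)=v(a)\}$ (lorsque $v(a)<\delta$). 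L'analogue s'applique aux boules ouvertes.

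Pour le point~2, on fixerait un ensemble $S\subseteq v(G)$ définissable dans la structure induite, de sorte que $\tilde S:=v^{-1}(S)$ soit $\mathbb{G}$-définissable, donc une combinaison booléenne de boules $B_1,\ldots,B_n$. Soit $\gamma_1,\ldots,\gamma_k$ la liste finie des valeurs $v(a_i)$ attachées aux boules $B_i$ de type (ii). Pour tout $\gamma\in v(G)$ hors de cet ensemble fini, l'appartenance de la sphère $\{v=\gamma\}$ à $\tilde S$ ne dépend que des comparaisons de $\gamma$ avec les rayons $\delta_i$ des boules de type (i); en particulier, la sphère est entièrement dans $\tilde S$ ou entièrement disjointe de $\tilde S$. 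En passant à l'image par $v$ et en traitant les $\gamma_i$ séparément, $S$ s'écrit comme une combinaison booléenne finie de segments finaux avec un nombre fini de modifications ponctuelles, donc comme une union finie de points et d'intervalles, ce qui établit l'$o$-minimalité de $v(G)$.

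Pour le point~1, posons $Q:=G_{\geq\gamma}/G_{>\gamma}$ et notons $\pi$ la projection. Un ensemble $T\subseteq Q$ est définissable dans la structure induite si et seulement si $\tilde T:=\pi^{-1}(T)$ est $\mathbb{G}$-définissable, et il est par construction une union de classes de $G_{>\gamma}$ dans $G_{\geq\gamma}$. Par $C$-minimalité, $\tilde T$ est une combinaison booléenne de boules, que l'on classerait selon la comparaison de leur rayon $\delta$ avec $\gamma$: les boules avec $\delta\leq\gamma$ («grandes») coupent $G_{\geq\gamma}$ en $\emptyset$ ou en $G_{\geq\gamma}$ tout entier, tandis que celles avec $\delta>\gamma$ («petites») coupent $G_{\geq\gamma}$ dans un sous-ensemble d'une seule classe de $G_{>\gamma}$. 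Les grandes boules contribuent uniformément à toutes les classes; les petites n'affectent qu'un nombre fini de classes. Comme $\tilde T$ doit être une union de classes entières, la contribution des petites boules dans chaque classe affectée doit se résoudre en «classe entière» ou «rien»; ainsi $T$ coïncide avec $\emptyset$ ou $Q$ à un nombre fini d'éléments près, ce qui est bien la minimalité forte.

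L'obstacle principal n'est aucun de ces calculs pris individuellement, mais plutôt l'identification soigneuse entre «définissable dans la structure induite» sur $Q$ ou $v(G)$ et «définissable par tirage arrière dans $\mathbb{G}$», ainsi que la comptabilité nécessaire pour réécrire la combinaison booléenne en termes de la dichotomie ci-dessus; le reste se ramène à un calcul direct d'arithmétique ultramétrique.
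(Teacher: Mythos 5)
Votre preuve est correcte sur le fond, mais elle suit une route différente de celle du texte~: l'article ne démontre pas cette proposition, il renvoie simplement à \cite{Macpherson1996} (Macpherson--Steinhorn), où ces deux faits sont établis pour les structures $C$-minimales générales. Vous reconstruisez donc l'argument de la littérature, et votre idée pivot --- la dichotomie ultramétrique selon laquelle une boule $a+G_{\geq\delta}$ est, ou bien égale à $G_{\geq\delta}$ (si $v(a)\geq\delta$), ou bien incluse dans la sphère $\{x \tq v(x)=v(a)\}$ (si $v(a)<\delta$) --- est exactement le bon levier~: elle réduit le point~2 à une combinaison booléenne de segments finaux modulo un ensemble fini de niveaux exceptionnels, et le point~1 au fait que seules finiment de classes de $G_{>\gamma}$ sont touchées par des «~petites~» boules, la trace devant être une union de classes entières. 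Ce que votre version achète est une preuve autonome et élémentaire~; ce que la citation achète est la gestion propre du point que vous signalez vous-même sans le résoudre complètement~: pour la \emph{minimalité forte} de $G_{\geq\gamma}/G_{>\gamma}$, il ne suffit pas de vérifier «~fini ou cofini~» dans la structure elle-même (contrairement à l'$o$-minimalité, cette propriété ne passe pas automatiquement aux extensions élémentaires, comme le rappelle d'ailleurs l'introduction de l'article). Il faut soit une borne uniforme sur le nombre de boules dans la décomposition (uniformité de la décomposition en «~swiss cheeses~», qui est un théorème non trivial), soit relancer votre argument dans une extension élémentaire arbitraire de $\mathbb{G}$ en justifiant que la structure induite sur le quotient y est bien une extension élémentaire de celle sur $Q$~; la définition de la $C$-minimalité adoptée ici, qui porte sur toute la classe élémentaire, rend la seconde voie praticable, mais ce pas mérite d'être écrit explicitement.
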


\begin{corr} Soit $(M,v)$ un $R$-module  valué  $C$-minimal.
	Notons $K/v_K$ le corps r\'esiduel associ\'e \`a la valuation $v_K$ induit par l'action $K$ sur $v(M)$. Alors,
\begin{enumerate}
\item pour tout  $\gamma \in v(M)$, $M_{\geq \gamma}/M_{>\gamma}$ est un $(K/v_K)$-espace vectoriel fortement minimal et,
\item la $R$-chaîne $v(M)$ est $o$-minimale.
\end{enumerate}
\end{corr}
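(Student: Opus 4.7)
Le plan est d'appliquer la proposition précédente à $\mathbb{G} := (M,v)$, vu comme groupe valué enrichi de sa structure additionnelle de $R$-module. Il suffit ensuite de constater, pour chacun des deux points, que la structure supplémentaire évoquée dans l'énoncé est déjà définissable à partir de celle induite par $\mathbb{G}$, puis d'utiliser qu'un réduit d'une structure fortement minimale (resp. $o$-minimale) reste fortement minimal (resp. $o$-minimal).

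Pour (1), je commencerais par vérifier que pour tout $a\in K$ tel que $v_K(a)\geq 0$, l'application $\gamma'\mapsto \gamma'\cdot a$ est non décroissante sur $v(M)$, par la définition même de $v_K$; donc la multiplication par $a$ sur $M$ préserve à la fois $M_{\geq \gamma}$ et $M_{>\gamma}$. Si de plus $v_K(a)>0$, on a $v(x.a)>v(x)\geq \gamma$ pour tout $x\in M_{\geq \gamma}\setminus\{0\}$, donc $a$ envoie $M_{\geq \gamma}$ dans $M_{>\gamma}$. Ainsi l'anneau de valuation $\mathcal{O}_{v_K}$ agit sur $M_{\geq \gamma}/M_{>\gamma}$ en annulant son idéal maximal $\mathfrak{m}_{v_K}$, et confère au quotient une structure de $(K/v_K)$-espace vectoriel définissable dans $\mathbb{G}$. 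Le point (1) de la proposition précédente assure la forte minimalité du quotient muni de toute la structure induite; celle-ci passe alors à tout réduit, et en particulier à la structure de $(K/v_K)$-espace vectoriel.

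Pour (2), la $R$-chaîne $v(M)$ est, comme $L_V$-structure, un réduit de la structure induite sur $v(M)$ par $\mathbb{G}$. En effet, pour chaque monôme $\mathbf{m}=t^na$, l'égalité $v(x.\mathbf{m})=v(x)\cdot \mathbf{m}$ (axiome d'un $R$-module valué) permet d'exprimer l'opération $\cdot \mathbf{m}$ sur $v(M)$ à partir de l'action induite par $\mathbb{G}$; et la clause (3) de la définition \ref{rchains} ramène le cas d'un $r\in R$ quelconque à celui des monômes. Le point (2) de la proposition précédente fournit alors directement l'$o$-minimalité voulue.

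La principale subtilité réside dans la vérification de la bonne définition de la structure de $(K/v_K)$-espace vectoriel sur le quotient résiduel, c'est-à-dire le fait que $\mathfrak{m}_{v_K}$ agit bien trivialement sur $M_{\geq \gamma}/M_{>\gamma}$; ceci repose sur les propriétés reliant $v_K$ à l'action de $K$ sur $v(M)$, déjà établies dans \cite{Onay2018a}. Une fois ce point acquis, le reste de la preuve n'est qu'une application immédiate des résultats généraux cités.
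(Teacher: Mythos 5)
Votre démonstration est correcte et suit exactement la voie que le papier laisse implicite (le corollaire y est énoncé sans preuve, comme conséquence directe de la proposition précédente) : appliquer cette proposition au module vu comme groupe valué enrichi, puis observer que la structure de $(K/v_K)$-espace vectoriel sur $M_{\geq \gamma}/M_{>\gamma}$ et la structure de $R$-chaîne sur $v(M)$ sont des réduits des structures induites. La vérification que l'idéal maximal de $\mathcal{O}_{v_K}$ agit trivialement sur le quotient résiduel est bien le seul point à contrôler, et votre argument à ce sujet est exact.
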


\begin{remarque}
Dans le point $1$ ci-dessus, si $(M,v)$ est $K$-trivialement valué, i.e. si $v_K$ est triviale, alors $K/v_K=K$ et donc tout   $M_{\geq \gamma}/M_{>\gamma}$ est un $K$ espace-vectoriel. \end{remarque}

\section{Cas d'une valuation non triviale et $K$-triviale}
Une des implications du théorème \ref{thmCmintrivi}  est contenue  dans la proposition suivante.

\begin{prop}\label{divhens->c-min}
Si $(M,v)$  est un module $K$-trivialement valué divisible henselien, si pour tout $q \in R\setminus\{0\}$, $\text{ann}_M(q)$ est fini  et si
$v(M)$ est ${o}$-minimal dans le langage $L'_V$, alors $(M,v)$ est ${ C}$-minimal.
\begin{proof}
Noter qu'un tel $(M,v)$ est r\'esiduellement divisible et affinement maximal par le corollaire 3.29, \cite{Onay2017}.  Nous renvoyons donc à la preuve de la proposition \ref{cmin}  
dont la présente proposition n'est qu'un cas particulier. 
\end{proof}
\end{prop}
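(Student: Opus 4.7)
The plan is to verify $C$-minimality directly: in any model of $\mathrm{Th}(M,v)$, show that every parametrically definable subset of a single variable is a finite boolean combination of balls. Since the hypotheses ($K$-triviality, divisibility, henselianity, finite annihilators, $L'_V$-$o$-minimality of $v(M)$) are first-order schemes, we may argue in $(M,v)$ itself. The first input is the cited Corollary 3.29 of \cite{Onay2017}, which gives residual divisibility and affine maximality; then, by the reformulated Theorem 3.35, for every nonzero $r\in R$ and every $z\in M$ there is a \emph{regular} preimage $y\in M$, i.e.\ $y.r=z$ and $v(y)\cdot r=v(z)$.

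Next I would reduce to atomic formulas. Up to boolean combinations, every atomic formula in $x$ over parameters falls into one of three shapes: (a) an equation $x.r=a$ with $r\in R\setminus\{0\}$, $a\in M$; (b) a comparison $v(x.r-a)\bowtie \gamma$ with $\gamma\in v(M)$; (c) a comparison $v(x.r-a)\bowtie v(x.s-b)$. Type (a) has finitely many solutions by the finite annihilator hypothesis, hence is trivially a boolean combination of balls (points). For type (b), pick $y$ regular for $r$ with $y.r=a$; then $x.r-a=(x-y).r$, and outside the finite set where $x-y$ fails to be regular for $r$ (a finite union of cosets of $\ann_M(r)$) one has $v(x.r-a)=v(x-y)\cdot r$. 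Since the action of $r$ on the $R$-chain $v(M)$ (Definition \ref{rchains}) is piecewise strictly monotonic, the inequality pulls back to a finite union of conditions $v(x-y)\bowtie\gamma'$, that is, to a boolean combination of balls centred at $y$.

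Type (c) is the crux. Writing $a=y_1.r$ and $b=y_2.s$ for regular $y_1,y_2$, the comparison becomes $v(x-y_1)\cdot r\bowtie v(x-y_2)\cdot s$ outside a finite exceptional set. By the ultrametric triangle inequality, the joint map $x\mapsto (v(x-y_1),v(x-y_2))$ is highly constrained: outside the smallest ball containing both $y_1$ and $y_2$ the two values coincide, while inside that ball exactly one of them equals $v(y_1-y_2)$ and the other varies freely. Using the $L'_V$-$o$-minimality of $v(M)$, the set of pairs $(\alpha,\beta)\in v(M)^2$ satisfying $\alpha\cdot r\bowtie \beta\cdot s$ is a finite boolean combination of $L'_V$-definable rectangles, which then pulls back to a boolean combination of ball conditions on $x$.

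The main obstacle is precisely step (c): the interaction between the non-commutative $R$-action on $v(M)$ and the ultrametric geometry of $M$. This is where the predicates $R_n$ in $L'_V$ are indispensable, since they record the dimensions of the residue layers $M_{\geq\gamma}/M_{>\gamma}$ as $\gamma$ varies, and it is exactly this data which guarantees that the $o$-minimal decomposition of the value chain descends to a genuine ball decomposition of $M$ without being disrupted by hidden definable structure concentrated on some residue layer. Once (c) is handled, the three cases together yield the required quantifier-free description in the language of $C$, completing the proof.
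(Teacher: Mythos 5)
Your overall strategy --- replace each term $v(x.r-a)$ by $v(x-c)\cdot r$ using regular preimages and the finiteness of $\text{ann}_M(r)$, then conclude from the $o$-minimality of the chain --- is the same as the paper's, which simply observes that the hypotheses give residual divisibility and affine maximality (corollaire 3.29 de \cite{Onay2017}) and defers to the general Proposition \ref{cmin}. But there is a genuine gap at your very first step: you ``reduce to atomic formulas'' without any justification. A definable subset of $M$ is given by a formula with quantifiers, and the reduction to boolean combinations of your shapes (a), (b), (c) is precisely the relative quantifier elimination for valued $R$-modules (th\'eor\`eme 4.6 de \cite{Onay2018a}), which is the central external input of the paper's argument and cannot be taken for granted. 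Nothing in your proposal supplies or cites this result; everything after that point only treats quantifier-free formulas.

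A secondary problem is your step (c), which you yourself identify as the crux but do not carry out. The specific claim that $\{(\alpha,\beta)\in v(M)^2 \tq \alpha\cdot r\bowtie\beta\cdot s\}$ is a finite boolean combination of $L_V'$-definable rectangles is false in general: $o$-minimality controls definable subsets of $v(M)$, not of $v(M)^2$, and already $\{(\alpha,\beta)\tq\alpha<\beta\}$ in a dense chain is not a finite boolean combination of rectangles. What saves the argument is the ultrametric constraint you mention (the pair $(v(x-y_1),v(x-y_2))$ ranges over an essentially one-dimensional set), and this is exactly what the paper delegates to the ``consid\'erations ultram\'etriques g\'en\'erales'' of the proof of Proposition 36 of \cite{Maalouf2010a}. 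Your appeal to the predicates $R_n$ at this point is a red herring: they enter only because the formula $\psi$ produced by the quantifier elimination is written in $L_V'$, not to rule out ``hidden definable structure'' on the residue layers.
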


\begin{lem}\label{puiseux-stayla-c_min}
Soit $(M,v)$ un module $K$-trivialement valué, tel que 
\begin{enumerate}
\item $M_{tor}$ est fini et $M=M_{tor} \oplus M_{>\theta}$, et
\item $M_{>\theta}$ est divisible et $v(M)$ est $o$-minimal.
\end{enumerate}
Alors $(M,v)$ est $C$-minimal.
\begin{proof} Si $M_{tor}=0$ alors, $M=M_{>\theta}$ est  divisible henselien donc $C$-minimal par la proposition précédante. Sinon, soit $N'$ une clôture divisible de $M_{tor}$. Posons $N:=N'\oplus M_{>\theta}$ et on étend $v$ à $N$ en posant $v(x_{tor} + x_{>\theta})=\theta$ si $x_{tor}$ est non nul. D'où $(N,v)$ est divisible henselien et $v(N)=v(M)$, donc $(N,v)$ est $C$-minimal par la proposition précédente. Alors $M=\bigcup_{a \in M_{tor}} M_{>\theta} + a$, est une union finie de boules de $N$, par conséquent il est $C$-minimal.
\end{proof}
\end{lem}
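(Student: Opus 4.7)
The plan is to reduce to the already-proved Proposition~\ref{divhens->c-min} by enlarging $M$ into a divisible henselian $R$-module $N$ in which $M$ is a finite union of balls, and then transferring $C$-minimality back.

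First, I would dispatch the torsion-free case $M_{tor}=0$: here $M=M_{>\theta}$ is divisible and $K$-trivially valued; any kernel element of $x\mapsto x.s$ for $s$ separable would be a non-trivial torsion element, hence have value $\theta$ by Lemma~3.10 of \cite{Onay2017}, so it cannot live in $M_{>\theta}$. Thus $M$ is henselian, annihilators $\text{ann}_M(r)$ are trivially zero, and $v(M)$ is $o$-minimal by hypothesis, so Proposition~\ref{divhens->c-min} applies.

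In the general case, I would pick a divisible closure $N'$ of the finite torsion submodule $M_{tor}$ (which exists and is unique up to isomorphism over $M_{tor}$ by Lemma~2.6(4) of \cite{Onay2017}), set $N:=N'\oplus M_{>\theta}$ as an $R$-module, and extend the valuation by $v(x+y):=v(y)$ when $x=0$ and $v(x+y):=\theta$ otherwise. The verification consists in checking that $(N,v)$ is $K$-trivially valued (using that $K^\times$ acts bijectively on a divisible module), divisible (as a direct sum of divisibles), has $v(N)=v(M)$, satisfies $N_{>\theta}=M_{>\theta}$ so that henselianity is the same as for $M_{>\theta}$ in the first case, and still has finite annihilators (which are contained in $N'$, whose annihilators are finite because $M_{tor}$ is). Proposition~\ref{divhens->c-min} then gives that $N$ is $C$-minimal.

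Finally, observe that $M=\bigsqcup_{a\in M_{tor}}(a+M_{>\theta})$, and each coset $a+M_{>\theta}$ coincides in $N$ with the open ball of radius $\theta$ around $a$, since the torsion component of $z-a$ vanishes iff the torsion component of $z$ equals $a$. Thus $M$ is a finite union of balls of the $C$-minimal $N$ and hence quantifier-free $C$-definable there. Because $M$ is an $R$-submodule whose valuation and $R$-action coincide with those induced from $N$, any subset of $M$ definable in $M$'s own language is definable in $N$ with parameters from $M$, so it is a Boolean combination of balls of $N$; intersecting each such ball with $M$ (a finite union of balls) yields a Boolean combination of balls of $M$. Running the same argument in any elementary extension yields $C$-minimality of $(M,v)$. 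The only delicate step is the construction and verification of $N$ in the mixed case; the descent from $N$ to $M$ afterwards is routine.
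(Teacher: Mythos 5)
Your proposal follows essentially the same route as the paper: treat $M_{tor}=0$ directly via Proposition~\ref{divhens->c-min}, then form $N=N'\oplus M_{>\theta}$ with the valuation extended by $v(x_{tor}+x_{>\theta})=\theta$ for $x_{tor}\neq 0$, apply the same proposition to $N$, and conclude by writing $M$ as a finite union of balls of $N$. The extra verifications you supply (finiteness of annihilators in $N$, the descent of definable sets from $N$ to $M$) are correct and merely make explicit what the paper leaves implicit.
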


Une conséquence immédiate des deux résultats précédents est:
\begin{corr}\label{puiseuxCmin}
L'anneau de valuation d'un corps de caractéristique $p>0$, parfait, muni d'une valuation henselienne,  de corps résiduel fini ou p-clos, et de groupe de valuation divisible, est {C}-minimal en tant que $\F_p[t;x\to x^p]$-module valué.
\begin{proof}
Un groupe abélien ordonné divisible est $o$-minimal, donc a fortiori sa restriction à la structure de  $\F_p[t;x\to x^p]$-chaîne. 
Le corollaire découle donc du 
 lemme 3.24 \cite{Onay2017}, avec la proposition \ref{divhens->c-min} si le corps résiduel est $p$-clos et avec le lemme  \ref{puiseux-stayla-c_min}
si le corps résiduel est fini.
\end{proof}
\end{corr}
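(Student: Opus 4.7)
L'id\'ee est de reconna\^itre l'anneau de valuation $\mathcal{O}$ comme un $R$-module valu\'e satisfaisant soit les hypoth\`eses de la proposition \ref{divhens->c-min} (si le corps r\'esiduel $k$ est $p$-clos), soit celles du lemme \ref{puiseux-stayla-c_min} (s'il est fini), puis de dispatcher vers l'un de ces deux r\'esultats. Je poserai donc $M:=\mathcal{O}$ muni de l'action $x\cdot t:=x^p$ ($F$ \'etant de caract\'eristique $p$, $\mathcal{O}$ est bien stable par Frobenius) et de la restriction de $v$. Comme $v$ est triviale sur $\F_p^\times$, $(M,v)$ est $K$-trivialement valu\'e; par ailleurs, $v(M)$ \'etant le c\^one positif d'un groupe ab\'elien ordonn\'e divisible, il est $o$-minimal dans le langage des groupes ordonn\'es, et donc dans $L_V'$ (l'action $\cdot t^k$ s'y identifiant \`a la multiplication par $p^k$, et les pr\'edicats $R_n$ n'y ajoutant aucune structure non d\'efinissable \`a partir de l'ordre).

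Je v\'erifierai ensuite les deux conditions communes aux deux r\'esultats cibles: d'une part, pour tout $r\in R\setminus\{0\}$, $\text{ann}_M(r)$ est l'ensemble des racines dans $\mathcal{O}$ d'un polyn\^ome additif non nul sur $\F_p$, donc fini; d'autre part, l'hens\'elianit\'e de $(F,v)$ se traduira en hens\'elianit\'e de $M$ en tant que $R$-module via le lemme 3.24 de \cite{Onay2017}. Dans le cas o\`u $k$ est $p$-clos, ce m\^eme lemme 3.24 fournit en outre la divisibilit\'e de $M$ comme $R$-module, si bien que la proposition \ref{divhens->c-min} s'applique imm\'ediatement.

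Il restera \`a traiter le cas o\`u $k=\F_q$ est fini (avec $q=p^n$). Je construirai alors un rel\`evement de Teichm\"uller $k\hookrightarrow\mathcal{O}$, qui existe car $(F,v)$ est hens\'elien d'\'egale caract\'eristique \`a corps r\'esiduel parfait, et qui fournit la d\'ecomposition $\mathcal{O}=k\oplus\mathfrak{m}$ en tant que $R$-modules. L'\'egalit\'e $M_{tor}=k$ s'\'etablit alors dans les deux sens: tout $x\in k$ v\'erifie $x^q=x$, i.e. $x\cdot(t^n-1)=0$; et r\'eciproquement, un argument direct sur les valuations montre qu'aucun polyn\^ome additif non nul ne s'annule sur $\mathfrak{m}\setminus\{0\}$ (apr\`es avoir factoris\'e les puissances de $t$, le mon\^ome de plus petit degr\'e en $x$ domine strictement les autres en valuation). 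Ainsi $M_{tor}=k$ est fini, $M_{>\theta}=\mathfrak{m}$ est divisible hens\'elien, et le lemme \ref{puiseux-stayla-c_min} conclut. L'obstacle principal anticip\'e r\'eside dans l'existence et l'exploitation du rel\`evement de Teichm\"uller, qui constitue le seul ingr\'edient non purement formel de l'argument dans ce second cas.
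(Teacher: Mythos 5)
Votre démonstration est correcte et suit essentiellement la même route que l'article : on vérifie que $\mathcal{O}$ est un $R$-module $K$-trivialement valué à chaîne $o$-minimale et à annulateurs finis, puis on aiguille vers la proposition \ref{divhens->c-min} (corps résiduel $p$-clos, via la divisibilité) ou vers le lemme \ref{puiseux-stayla-c_min} (corps résiduel fini). La seule différence est de présentation : l'article délègue l'henselianité, la divisibilité et la décomposition $M=M_{tor}\oplus M_{>\theta}$ au lemme 3.24 de \cite{Onay2017}, tandis que vous explicitez ces vérifications (en particulier le relèvement de Teichmüller et l'argument de valuation montrant $M_{tor}\cap\mathfrak{m}=0$), ce qui est un complément utile mais non une approche différente.
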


\begin{exemple}
Soit $R=\F_p[t;x\mapsto x^p]$.
L'anneau des séries de Puiseux sur $\F_p$ ou sur un corps $p$-clos est $C$-minimal en tant que $R$-module valué, mais le corps des séries de Puiseux ne l'est pas (en tant que $R$-module). 
\end{exemple}

Le lemme ci-dessous, avec le lemme précédent, montreront que la condition $2.$ cité dans le théorème \ref{thmCmintrivi}, implique la $C$-minimalité.

\begin{lem}
Soit $(M,v)$  un module $K$-trivialement valué henselien et tel que
\begin{enumerate}
\item $M_{tor}$ est fini  et $M=M_{tor} \oplus M_{>\theta}$,
\item $M_{>\theta}.t$ contient une boule  de la forme $M_{\geq \gamma}$ ou $M_{>\gamma}$,  qui est d'indice fini dans $M_{>\theta}.t$   et $v(M_{>\theta})$ est $o$-minimal.
\end{enumerate}
Alors $(M,v)$ est $C$-minimal.
\begin{proof}
Si $M_{>\theta}$ est divisible, alors l'assertion suit de lemme ci-dessus. 

Montrons d'abord que la valuation sur $M_{>\theta}$  détermine une et une seule valuation sur chacune de ses clôtures divisibles (toutes sont d\'ej\`a isomorphes en tant que $R$-modules). 
Soit $N$ une clôture divisible   de $M_{>\theta}$. Alors, puisque $M_{>\theta}$ est divisible par tous les polynômes séparables, 
 $x \in N$ si et seulement si  $x.t^n \in M_{>\theta}$ pour un certain $n \in N$.  Soit alors $x \in N\setminus M_{>\theta}$ et $n$ minimal tel que $x.t^n \in M_{>\theta}$. Soit  $\gamma=v(x.t^n)$. On définit $\bar{v}(x)$ comme le couple $(\gamma, n)$. Si $y \in N\setminus M_{>\theta}$ est tel que $\bar{v}(y)=(\delta, k)$, on décrète que 
$\bar{v}(x) < \bar{v}(y)$ si et seulement si $\tau^k(\gamma) < \tau^n(\delta)$. Il est immédiat de vérifier que $(N,\bar{v})$ est un module $K$-trivialement valué, et que $\bar{v}$ prolonge $v$. 

Montrons maintenant que $M_{>\theta}$ s'identifie à une union finie de boules de $(N,\bar{v})$. Soit $B$ la boule de $M$,  d'indice fini dans $M_{>\theta}.t$, dont l'existence est donnée par l'hypothèse du corollaire. Montrons que  $B$ est en fait une boule de $N$.  Soit  $\gamma$ le rayon de $B$, i.e. $B=M_{>\gamma}$ ou $B=M_{\geq \gamma}$. Soit $x \in N \setminus M_{>\theta}$ , avec $v(x) > \gamma$, et soit $k\geq 1$ minimal tel que $\overline{x}.t^k \in  M_{>\theta}$. On a, $v(x.t^k) > 
\tau^{k}(\gamma)> \gamma$. Or $M_{>\theta}.t \supset B$ implique $x.t^{k-1} \in M_{>\theta}$. Contradiction. D'où $B=N_{\geq \gamma}$ ou
$B=N_{>\gamma}$. En écrivant $M_{>\theta}.t= \bigcup_i B +a_i$, on a $M_{>\theta}=B.t^{-1} + a_.t^{-1}$, où $B.t^{-1}$ est nécessairement la boule ouverte ou fermée de rayon $\tau^{-1}(\gamma)$ et $a_i.t^{-1}$ est l'unique $b_i \in N$ tel que $b_i.t=a_i$. Par conséquent, $M_{>\theta}$   est une union finie de boules de $N$ donc $M$ est une union finie de boules de $N\oplus M_{tor}$ et il suffit de montrer que $N\oplus M_{tor}$ est $C$-minimal.  
Puisque $(N, \overline{v})$ est divisible henselien sans-torsion, le lemme \ref{puiseux-stayla-c_min} dit  qu'il suffit de montrer que 
$\overline{v}(N)$ est $o$-minimal.
 
Montrons  d'abord que  $\overline{v}(N)$
est la clôture de $v(B)$ par $\tau^{-1}$.
 Soit  $x \in M_{>\theta}$, tel que $x.t \notin B$. On a $v(x.t-a_i)>v(x.t)=v(a_i)$ pour un certain $i \in \{1,\dots, n\}$. Dans ce cas , $v(x.t^2)>v(x.t)$ et donc pour tout $i$, $v(x.t^2-a_i)=v(a_i)$. Alors $x.t^2 \in B$ nécessairement. Par ailleurs, si $x.t \in B$ de même pour  $x.t^2$. Donc $N$ est aussi la clôture divisible de $B$. 
D'où $\overline{v}(N)$
est la clôture de $v(B)$ par $\tau^{-1}$,  $\overline{v}(N)=\bigcup_{i \in \N } \tau^{-i}v(B)$, et chaque   $\tau^{-i}v(B)$ est isomorphe à $v(B)$ donc ils sont  tous $o$-minimaux. 

Puisque $v(M)$ est $o$-minimal $v(M)\setminus\{\theta,\infty\}$ est dense ou discret: Soit  $DE$ l'ensemble des points  ayant un voisinage dense et $DI$ l'ensemble des points ayant un prédécesseur et un successeur; par $o$-minimalité, $v(M)$ se partionne comme $DE\cup DI\cup F$, avec $F$ fini, plus précisement $DE$ et $DI$ sont des unions finis d'intervalles séparés par au moins un point de $F$; par ce que $\tau$ est strictement croissant, il préserve chacun des trois ensembles, $DE$, $DI$ et $F$, et parce qu'il n'admet aucun point fixe sur $v(M)\setminus\{\theta,\infty\}$,  $F\setminus\{\theta,\infty\}=\emptyset$, par conséquent $v(M)\setminus\{\theta,\infty\}=DE$ ou $v(M)\setminus\{\theta,\infty\}=DI$.

Maintenant, par \cite{Pillay1987}, si $v(M)$ est discret alors $\tau$ est une translation sur $v(M)$ et donc $\overline{v}(N)$ est discret et $\tau$ reste une translation. Sinon si $v(M)$ est dense, par construction $\overline{v}(N)$ est dense. Dans le 
deux cas, la $\overline{v}(N)$ est $o$-minimal par le corollaire 1.16 de \cite{Maalouf2010a}.
\end{proof}
\end{lem}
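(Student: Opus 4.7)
The approach is to reduce to the previous lemma by embedding $M$ as a finite union of balls in a divisible henselian module $N \oplus M_{tor}$, where $N$ is a divisible closure of $M_{>\theta}$. The first task is to equip $N$ with a valuation $\bar v$ extending $v$. Since $(M_{>\theta}, v)$ is henselian, it is divisible by every separable polynomial, so every $x \in N$ admits a minimal $n \in \N$ with $x \cdot t^n \in M_{>\theta}$, and I would set $\bar v(x) := (v(x \cdot t^n), n)$, ordered by comparing $\tau$-iterates: for $y$ with $\bar v(y) = (\delta, k)$, declare $\bar v(x) < \bar v(y)$ iff $\tau^k(v(x \cdot t^n)) < \tau^n(\delta)$. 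Routine verifications that this defines an $R$-chain, that $(N, \bar v)$ is a valued $R$-module, and that $\bar v$ is the unique such extension (the divisible closure being unique up to $R$-isomorphism over $M_{>\theta}$) complete this step.

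The second step is to show that the hypothesised ball $B \subseteq M_{>\theta} \cdot t$ of radius $\gamma$ is actually a ball of $(N, \bar v)$. For contradiction, assume $x \in N \setminus M_{>\theta}$ has $v(x) > \gamma$, and let $k \geq 1$ be minimal with $x \cdot t^k \in M_{>\theta}$. Then $v(x \cdot t^k) > \tau^k(\gamma) > \gamma$ places $x \cdot t^k$ in $B \subseteq M_{>\theta} \cdot t$, contradicting the minimality of $k$. Writing $M_{>\theta} \cdot t = \bigcup_i B + a_i$ and pulling back through the bijection $\cdot t$ inside $N$ then expresses $M_{>\theta}$, hence $M$ itself, as a finite union of balls of $N \oplus M_{tor}$, so it suffices to prove $(N \oplus M_{tor}, \bar v)$ is $C$-minimal.

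By the previous lemma, this reduces to verifying that $\bar v(N)$ is $o$-minimal. Iterating the previous argument shows $N$ is in fact the divisible closure of $B$ alone: any $x \in M_{>\theta}$ with $x \cdot t \notin B$ satisfies $v(x \cdot t^2 - a_i) = v(a_i)$ for every $i$, hence $x \cdot t^2 \in B$. Consequently $\bar v(N) = \bigcup_{i \in \N} \tau^{-i} v(B)$, each $\tau^{-i} v(B)$ being $R$-chain-isomorphic to the $o$-minimal chain $v(B)$.

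The main obstacle will be to lift this to $o$-minimality of the union itself. The plan is to exploit the density/discreteness dichotomy arising from the $o$-minimality of $v(M)$: partition $v(M) \setminus \{\theta, \infty\}$ into the set $DE$ of points with a dense neighbourhood, the set $DI$ of points having both an immediate predecessor and successor, and a finite exceptional set $F$; since $\tau$ is strictly increasing it setwise preserves each of $DE$, $DI$, $F$, and since $\tau$ fixes no point outside $\{\theta, \infty\}$ we must have $F \setminus \{\theta, \infty\} = \emptyset$. Hence $v(M) \setminus \{\theta, \infty\}$ is either uniformly dense or uniformly discrete. In the discrete case Pillay's classification forces $\tau$ to be a translation, a property that persists on the extended chain $\bar v(N)$; in the dense case the directed union visibly preserves density. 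Either way, corollary 1.16 of \cite{Maalouf2010a} yields the $o$-minimality of $\bar v(N)$, closing the reduction.
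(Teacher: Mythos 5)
Votre preuve suit essentiellement la même démarche que celle de l'article : construction de la valuation $\bar v$ sur la clôture divisible $N$ via les couples $(v(x.t^n),n)$, identification de $B$ comme boule de $N$, réduction à la $C$-minimalité de $N\oplus M_{tor}$ puis à l'$o$-minimalité de $\bar v(N)=\bigcup_i \tau^{-i}v(B)$, et conclusion par la dichotomie dense/discret (Pillay, puis le corollaire 1.16 de Maalouf). L'argument est correct et coïncide étape par étape avec la preuve originale.
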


Les lemmes qui suivent   établissent des réciproques  aux résultat ci-dessus et ainsi complètent la preuve du théorème \ref{thmCmintrivi}.
\begin{lem}\label{c-min2div}
 Soit $(M,v)$  un module $K$-trivialement valué, ${ C}$-minimal, infini.
S'il existe  $a \in M$ tel que $v(a)< \theta$   alors M est divisible.
\begin{proof}
Soit $q\in R$, non nul. S'il existe un $a \in M$ comme ci-dessus, alors $v(M.q)$
n'a pas de premier élément. Si $M.q \neq M$ alors par la proposition \ref{c-min_d'indice_fini} il contient un sous-groupe $H$ de la forme
$M_{\geq \gamma}$ ou de la forme $M_{>\gamma}$ qui est d'indice fini dans $M.q$. Alors $M.q$ est la réunion disjointe finie des $H + a_i$ et donc $v(M.q)$ a un  premier élément. Contradiction.  
\end{proof}
\end{lem}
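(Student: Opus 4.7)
The plan is to establish divisibility by showing $M.q = M$ for every nonzero $q \in R$. The case $q \in K^\times$ is immediate since multiplication by $c \in K^\times$ is an $R$-module automorphism of $M$ (inverse: multiplication by $c^{-1}$), so I may fix $q$ of degree $d \geq 1$ and write $q = \sum_{j=0}^{d} t^j c_j$ with $c_d \neq 0$.

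The crucial step is to produce an infinite strictly decreasing sequence in $v(M.q)$, showing that $v(M.q)$ has no first element. By definition of the cut, $v(x) > \theta \iff v(x.t) > v(x)$, so $v(a) < \theta$ yields $v(a.t) < v(a)$; since $v(a.t)$ remains strictly below $\theta$, induction produces a strictly decreasing sequence $(v(a.t^n))_{n \in \mathbb{N}}$. $K$-triviality of the valuation gives $v(a.t^j c_j) = v(a.t^j)$ whenever $c_j \neq 0$, and as these finitely many values are pairwise distinct, the ultrametric inequality forces $v(a.q) = \min_j v(a.t^j) = v(a.t^d)$. Applying the same computation with $a.t^n$ (still of valuation $< \theta$) in place of $a$ yields $v(a.t^n.q) = v(a.t^{n+d})$, strictly decreasing in $n$, so $v(M.q)$ admits no minimum.

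Suppose now for contradiction that $M.q \neq M$. The elements $a.t^n.q$ are nonzero with pairwise distinct valuations, so $M.q$ is infinite. By Proposition \ref{c-min_d'indice_fini}, $M.q$ contains a ball $H$ of the form $M_{\geq \gamma}$ or $M_{>\gamma}$ of finite index in $M.q$, whence $M.q = \bigcup_{i=1}^{k} (H + a_i)$ is a finite union of translated balls; valuations inside such a translate are bounded below by $\min(\gamma, v(a_i))$, so $v(M.q)$ attains a minimum, contradicting the previous paragraph. Hence $M.q = M$ for every nonzero $q$, i.e.\ $M$ is divisible. The main technical obstacle is the exact equality $v(a.q) = v(a.t^d)$: this is where both hypotheses are truly used, $K$-triviality to neutralize the scalar coefficients $c_j$, and $v(a) < \theta$ combined with the $R$-chain axiom (applied to $\mathbf{m}_1 = t$, $\mathbf{m}_2 = 1$) to force the strict decrease of the values $v(a.t^j)$ that turns the ultrametric inequality into an equality.
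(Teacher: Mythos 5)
Your argument is the paper's argument: show that $v(M.q)$ cannot be ``almost bounded below'', then invoke Proposition \ref{c-min_d'indice_fini} to write $M.q$ as a finite union of cosets of a ball $H$ and derive a contradiction. Your valuation computation $v(x.q)=v(x.t^{d})=\tau^{d}(v(x))$ for $v(x)<\theta$ --- $K$-triviality to neutralize the coefficients, axiom (4) of $R$-chains to make the monomial values pairwise distinct so that the ultrametric inequality becomes an equality --- is exactly the content the paper leaves implicit, and it is correct.

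The one weak point is the inference ``$v(a.t^{n}.q)=v(a.t^{n+d})$ is strictly decreasing in $n$, so $v(M.q)$ admits no minimum.'' A subset of a chain containing an infinite strictly decreasing sequence may still have a minimum lying below all terms of that sequence, and nothing in the axioms forces the orbit $\tau^{n}(v(a))$ to be coinitial in $v(M)_{<\theta}$: a strictly increasing $\tau$ with $\tau(\delta)<\delta$ can perfectly well have orbits bounded below. So if the finite-index ball $H$ of your last paragraph has radius $\gamma<\theta$ lying below every $\tau^{n}(v(a))$, your two paragraphs do not yet contradict each other. (The same imprecision is present in the paper, which asserts without proof that $v(M.q)$ has no first element and that a finite union of cosets of a ball forces one.) The fix is already contained in your own computation: it applies to \emph{every} $x$ with $v(x)<\theta$, not only to the elements $a.t^{n}$. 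Since $v$ is surjective, every value $\mu\in v(M.q)$ with $\mu<\theta$ is of the form $v(x)$ for such an $x$, whence $\tau^{d}(\mu)=v(x.q)\in v(M.q)$ is a strictly smaller value of $v(M.q)$. Iterating from $\mu=\gamma$ when $\gamma<\theta$ (or using your sequence $v(a.t^{n+d})<\theta\leq\gamma$ when $\gamma\geq\theta$) produces infinitely many values of $v(M.q)$ strictly below $\gamma$, which the decomposition $M.q=\bigcup_i (H+a_i)$ forbids, since at most the finitely many cosets with $a_i\notin H$ can contribute values below the radius. With that one-line addition the proof closes and coincides with the paper's.
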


\begin{lem}\label{anninfini_implique_Cmin}
Soit $(M,v)$ un module non trivialement valué qui est $K$-trivialement valué et $C$-minimal. Alors, pour tout $r \in R\setminus\{0\}$, $\text{ann}_M(r)$   est fini.
\begin{proof}
Si $\text{ann}_M(r)$,  est une partie propre infini de $M$, alors par la proposition 
\ref{c-min_d'indice_fini} il existe $\gamma \in \Delta$ tel que $\text{ann}_M(r)$  contient une boule de la forme $M_{\geq \gamma}$ ou $M_{>\gamma}$ qui est d'indice fini dans $\text{ann}_M(r)$. Comme $v(\text{ann}_M(r)) = \{\theta\}$, on obtient  $\gamma=\theta$; dans ce cas là, on a nécessairement $M_{>\theta}={0}$ et donc $\text{ann}_M(r)=M_{\geq\theta}=M_{tor}$. Cela implique en particulier $\infty$ est l'unique élément qui est strictement$>\theta$. Puisque $(M,v)$ est non trivialement valué, il existe donc des éléments de valuation $<\theta$. Par le lemme ci-dessus $M$ est alors divisible; d'où $M_{tor}$ est divisible.  Or ceci contredit le fait que $M_{tor}$ est annulé par un seul élément.
\end{proof}
\end{lem}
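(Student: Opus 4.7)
The plan is to argue by contradiction: suppose that for some $r \in R \setminus \{0\}$ the subgroup $\text{ann}_M(r)$ is infinite. First I would verify that $\text{ann}_M(r)$ is proper in $M$; otherwise $M$ would be killed by $r$, hence be entirely torsion, and then Lemma~3.10(3) of \cite{Onay2017} would give $v(M) = \{\theta, \infty\}$, contradicting the hypothesis that $(M,v)$ is non-trivialement valué. Proposition~\ref{c-min_d'indice_fini} then furnishes some $\gamma \in v(M)$ and a ball $B \in \{M_{\geq \gamma},\, M_{>\gamma}\}$ contained in $\text{ann}_M(r)$ with finite index.

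The next step is to pin down $\gamma$. Because $\text{ann}_M(r) \subseteq M_{tor}$, the only valuations attained in $\text{ann}_M(r)$ are $\theta$ and $\infty$. Since $B$ is infinite it contains a non-zero element, which must therefore have valuation $\theta$, and since $v$ is surjective onto $v(M)$ the ball $B$ cannot include any element of valuation strictly less than $\theta$ either; these two facts together force $\gamma = \theta$. Moreover $B$ cannot be $M_{>\theta}$, for a non-zero element $x \in M_{>\theta}$ would satisfy $v(x) > \theta$, which is excluded for torsion elements, so $M_{>\theta}$ would intersect $M_{tor}$ only in $\{0\}$ and therefore could not have finite index in the infinite group $\text{ann}_M(r)$. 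Hence $B = M_{\geq \theta}$, and the chain of inclusions $M_{\geq \theta} \subseteq \text{ann}_M(r) \subseteq M_{tor} \subseteq M_{\geq \theta}$ collapses to $M_{\geq \theta} = M_{tor} = \text{ann}_M(r)$, while $M_{>\theta} = \{0\}$.

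Consequently no element of $v(M)$ lies strictly between $\theta$ and $\infty$. Since $|v(M)| \geq 3$ by the non-triviality of the valuation and $\{\theta, \infty\} \subseteq v(M)$, there must exist $a \in M$ with $v(a) < \theta$. Lemme~\ref{c-min2div} now applies and yields that $M$ is divisible.

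The contradiction closes quickly. Pick $x \in M_{tor} \setminus \{0\}$; by divisibility there exists $y \in M$ with $y \cdot r = x$. Then $y \cdot r^2 = x \cdot r = 0$ with $r^2 \neq 0$, so $y$ is torsion, whence $y \in M_{tor} = \text{ann}_M(r)$, so $y \cdot r = 0$, contradicting $y \cdot r = x \neq 0$. I expect the main obstacle to be the second paragraph, in particular ruling out $B = M_{>\theta}$ and extracting the identification $M_{tor} = \text{ann}_M(r)$; once that identification is available, divisibility delivers the contradiction almost immediately.
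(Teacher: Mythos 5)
Your argument is correct and follows the paper's own proof essentially step for step: the finite-index ball inside $\text{ann}_M(r)$ via la proposition \ref{c-min_d'indice_fini}, the identification of that ball with $M_{\geq\theta}$ so that $\text{ann}_M(r)=M_{tor}=M_{\geq\theta}$ and $M_{>\theta}=\{0\}$, the existence of an element of valuation $<\theta$ from the non-triviality of $v$, divisibility via le lemme \ref{c-min2div}, and the final contradiction between $M_{tor}$ being divisible and being annihilated by $r$. The only differences are that you spell out steps the paper leaves implicit (properness of $\text{ann}_M(r)$, exclusion of the case where the ball is $M_{>\theta}$, and the explicit torsion computation at the end), which is harmless.
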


\begin{corr}
Soit $(M,v)$ non trivialement valué, $K$-trivialement valué et $C$-minimal. Si $M_{tor}$ est infini alors $M_{tor}$ est divisible et fortement minimal.
\end{corr}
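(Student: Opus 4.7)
The plan is to combine the two preceding lemmas with Theorem~\ref{thmfrtmin}. Lemma~\ref{anninfini_implique_Cmin} applied to $(M,v)$ gives that $\text{ann}_M(r)$ is finite for every nonzero $r \in R$, so in particular $\text{ann}_{M_{tor}}(r) = \text{ann}_M(r)$ is finite. The overall strategy is to show first that $M$ itself is divisible, whence divisibility descends to $M_{tor}$: if $y.r = x \in M_{tor}$ and $x.q = 0$ with $q \neq 0$, then $y.(rq) = 0$ with $rq \neq 0$, so $y \in M_{tor}$. Once $M_{tor}$ is known divisible, since it is trivially valued (its valuation lies in $\{\theta, \infty\}$) with finite annihilators, condition~1 of Theorem~\ref{thmfrtmin} immediately gives strong minimality.

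To prove $M$ is divisible, I invoke Lemma~\ref{c-min2div} and argue that the required element $a \in M$ with $v(a) < \theta$ must exist. Suppose not, i.e., $M = M_{\geq \theta}$. Since $M_{tor}$ is stable under multiplication by $R$ (using the right-Ore property of the right-Euclidean ring $R$), for any nonzero torsion $x$ one has $x.t \in M_{tor}$, hence $v(x.t) \in \{\theta, \infty\}$; but $v(x.t) = v(x) \cdot t = \theta \cdot t$, and strict monotonicity of $\cdot t$ on $\Delta \setminus \{\infty\}$ leaves two possibilities. If $\theta \cdot t > \theta$, then $v(x.t) > \theta$ forces $v(x.t) = \infty$, so $x.t = 0$; thus $M_{tor} \subseteq \text{ann}_M(t)$, which is finite, contradicting $M_{tor}$ infinite.

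The remaining subcase $\theta \cdot t = \theta$ gives $M_{tor} \cap M_{>\theta} = \{0\}$, so $M_{tor}$ embeds into $V := M_{\geq \theta}/M_{>\theta}$. By the corollary of Section~4, $V$ is a strongly minimal $K$-vector space (here $K/v_K = K$); Theorem~\ref{thmfrtmin} applied to $V$ yields either that $V$ is $R$-divisible with finite annihilators, or that $V.r_0 = 0$ for some irreducible $r_0 \in R$. The latter would imply $M_{tor}.r_0 \subseteq M_{tor} \cap M_{>\theta} = \{0\}$, so $M_{tor} \subseteq \text{ann}_M(r_0)$ would be finite, a contradiction. In the former case, $R$-divisibility of $V$ translates to $M = M.r + M_{>\theta}$ for every $r \neq 0$; combined with the $C$-minimality of $M$ and Proposition~\ref{c-min_d'indice_fini} applied to the proper subgroup $M.r$, the finite ball-index structure of $M.r$ becomes incompatible with $V$ being infinite (in which $M_{tor}$ embeds), again yielding a contradiction. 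The main obstacle is precisely this last step in the subcase $\theta \cdot t = \theta$, where one must exploit both the strong-minimal structure of $V$ and the $C$-minimal geometry of $M$ simultaneously to close out the argument.
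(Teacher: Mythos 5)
Your argument does not close, and the gap is not a technicality: the contradiction you are hunting for in the subcase $\theta\cdot t=\theta$ does not exist. Your global plan is to produce an element of valuation $<\theta$ so as to invoke Lemma~\ref{c-min2div} and conclude that $M$ itself is divisible; you therefore try to refute $M=M_{\geq\theta}$. But $M=M_{\geq\theta}$ is perfectly compatible with every hypothesis of the corollary: take any divisible henselian $K$-trivially valued module of the form $M_{tor}\oplus M_{>\theta}$ with $M_{tor}$ infinite and divisible, all annihilators finite and $v(M)$ $o$-minimal in $L_V'$ --- by Proposition~\ref{divhens->c-min} it is $C$-minimal, it is non-trivially valued, its torsion is infinite, yet it has no element of valuation $<\theta$. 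In such a module $V:=M_{\geq\theta}/M_{>\theta}$ is infinite and divisible and $M.r+M_{>\theta}=M$ for every $r$, which is exactly the configuration you declare ``incompatible with $V$ being infinite''; it is not, and when $M.r=M$ Proposition~\ref{c-min_d'indice_fini} has nothing to say. (This is also visible in Theorem~\ref{thmCmintrivi} itself, whose case~2 describes $C$-minimal modules concentrated in $M_{\geq\theta}$ with possibly infinite torsion.) So no amount of extra work will finish the proof along this route; the detour through the divisibility of all of $M$ has to be abandoned.

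The paper's proof uses precisely the ingredients you assemble in your subcase $\theta\cdot t=\theta$ and then set aside. Since $v(M_{tor})=\{\theta,\infty\}$ and $x\mapsto x.t$ is injective, $M_{tor}$ embeds into $V=M_{\geq\theta}/M_{>\theta}$, which is strongly minimal by the corollary of Section~4 and infinite because $M_{tor}$ is. For each $r\neq 0$ one has $V.r\neq 0$ --- otherwise $M_{tor}.r\subseteq M_{tor}\cap M_{>\theta}=0$ and $\text{ann}_M(r)\supseteq M_{tor}$ would be infinite, contradicting Lemma~\ref{anninfini_implique_Cmin} --- so Lemma~\ref{frtmin} gives $V.r=V$ with $\text{ann}_V(r)$ finite, i.e.\ $V$ is divisible, and divisibility then descends to the torsion part (your own computation $y.(rq)=(y.r).q=0$ is the mechanism that keeps the divisor inside the torsion submodule). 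Strong minimality of $M_{tor}$ is then obtained exactly as in your last step, via Proposition 2.15 of \cite{Onay2017} together with the finiteness of each $\text{ann}_M(r)$, which is condition~1 of Theorem~\ref{thmfrtmin}. At no point is the divisibility of $M$ itself needed, and it is not in general available.
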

\begin{proof} $M_{tor}$ se plonge dans  le $R$-module $M_{\geq \theta}/M_{>\theta}$ qui est fortement minimal, on a la divisibilité de $M_{\geq \theta}/M_{>\theta}$ par le lemme ci-dessus et par le lemme \ref{frtmin}, d'où la divisibilité de $M_{tor}$. Alors par la proposition 2.15 dans \cite{Onay2017}, tout sous-ensemble définissable de $M_{tor}$ est une combinaison booléenne  d'ensembles du type $\text{ann}_M(r)$, avec $r \in R\setminus\{0\}$. D'où, par le théorème \ref{frtmin} et par le fait
	que chaque annulateur est fini, la forte minimalité de $M_{tor}.$ 
\end{proof}

\begin{lem}
Soit $(M,v)$  un module $K$-trivialement valué ${C}$-minimal. Alors $(M,v)$ est henselien.
\begin{proof}
On va considérer $2$ cas:\\
{\bf a.} {\it $v(M_{>\theta})$ n'a pas de premier élément.} 
Par la proposition
\ref{c-min_d'indice_fini}, si  $M_{>\theta}.r$ ($r \in R\setminus 0$) n'est pas égal à $M_{>\theta}$, alors il contiendrait 
une boule ouverte ou fermée d'indice fini. 
En particulier, ou bien $v(M_{>\theta }.r)$ aurait un plus petit élément ou bien pour un certain $\gamma>\theta$ on aurait 
$M_{>\theta }.r=M_{>\gamma}$. Ces deux cas sont impossibles car si $r$ est séparable et $x \in M_{>\theta}$, on a  $v(x.r)=v(x)$.

\noindent
{\bf b.} {\it $v(M_{>\theta})$ a un plus petit élément.} 
Soit $\gamma_{0}$ ce plus petit élément $> \theta$. Soit $s \in R_{sep}$. 
Écrivons $s$ comme $tq+ a$ avec $a \in K$. Encore par le lemme 
\ref{c-min_d'indice_fini}, si  $M_{>\theta}.s \neq M_{>\theta}$ alors il contient
un certain $M_{>\gamma}$ ou $M_{\geq \gamma}$ qui est d'indice fini dans $M_{>\theta}.s$. Supposons que c'est $M_{> \gamma}$, l'autre cas se traite de même. Puisque $M_{>\gamma}$ est d'indice fini dans $M_{>\theta}.s$, il existe  $\gamma_1, \dots \gamma_k$ tel que $\gamma_0 < \gamma_1 < \dots < \gamma_k= \gamma$ avec $\gamma_{i+1}$  le successeur de $\gamma_{i}$ dans $v(M)$. Soit $x$ de valuation $\gamma_0$, alors 
$x_0:=(x - x.a^{-1}(tq + a))=(x - x.a^{-1}s)$ est de valuation $ \geq \gamma_1 >\gamma_{0}$. Ainsi
on définit $x_i:=(x_{i-1} - x_{i-1}.a^{-1}s)$, avec $v(x_i) > \gamma_i$, pour $1\leq i \leq k$. Donc $v(x_{k})>\gamma$. Par conséquent $x_k$ est divisible
par $s$, et donc $x$ aussi. On a montré que $M_{>\theta}$ est divisible
par les polynômes séparables, i.e. $(M,v)$ est henselien.
\end{proof}
\end{lem}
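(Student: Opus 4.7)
The plan is to show that for every separable polynomial $s\in R$, multiplication by $s$ is a bijection of $M_{>\theta}$. Writing $s=tq+a$ with $a\in K^\times$, the $K$-triviality of the valuation gives $v(x.t)>v(x)=v(x.a)$ for every nonzero $x\in M_{>\theta}$, and this strict inequality is preserved after multiplying by $q$. The ultrametric inequality then yields $v(x.s)=v(x)$ on $M_{>\theta}$, which immediately gives injectivity; surjectivity is the substantive part.

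To prove surjectivity, I would argue by contradiction: suppose $M_{>\theta}.s$ is a proper definable subgroup of $M_{>\theta}$. By Proposition \ref{c-min_d'indice_fini} it must contain a ball $M_{>\gamma}$ or $M_{\geq\gamma}$ of finite index, and I would split the analysis according to whether $v(M_{>\theta})$ admits a least element. If not, the identity $v(M_{>\theta}.s)=v(M_{>\theta})$ forces $v(M_{>\theta}.s)$ to have no least element either, contradicting the finite-index ball structure (a finite union of cosets of a ball always yields a least valuation in its image, and an equality $M_{>\theta}.s=M_{>\gamma}$ is likewise incompatible with $v(M_{>\theta}.s)=v(M_{>\theta})$).

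If $v(M_{>\theta})$ has a least element $\gamma_0$, I would enumerate the finite chain $\gamma_0<\gamma_1<\dots<\gamma_k=\gamma$ of successive values between $\gamma_0$ and $\gamma$ and run a Newton-type approximation: starting with $x\in M_{>\theta}$ of valuation $\gamma_0$, set $x_0:=x-x.a^{-1}s$ and iterate $x_i:=x_{i-1}-x_{i-1}.a^{-1}s$, each step increasing the valuation by at least one rank, so that $v(x_i)>\gamma_i$. After $k$ iterations, $v(x_k)>\gamma$ places $x_k$ inside the ball, hence inside $M_{>\theta}.s$, and telescoping gives $x\in M_{>\theta}.s$, contradicting properness. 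The main delicate point will be justifying that each step $x_{i-1}\mapsto x_i$ indeed moves up to the next valuation rank: this rests on $K$-triviality and on the fact that the leading term $x_{i-1}.a$ in $x_{i-1}.s=x_{i-1}.tq+x_{i-1}.a$ is what controls the valuation, so subtracting $x_{i-1}.a^{-1}s$ precisely cancels the term of smallest valuation.
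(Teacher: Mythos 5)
Your proposal is correct and follows essentially the same route as the paper: the same dichotomy on whether $v(M_{>\theta})$ has a least element, the same appeal to Proposition~\ref{c-min_d'indice_fini} to produce a finite-index ball inside $M_{>\theta}.s$, the same contradiction via $v(x.s)=v(x)$ in the dense case, and the same Newton-type iteration $x_i:=x_{i-1}-x_{i-1}.a^{-1}s$ climbing the finite chain of successor values in the discrete case. The delicate point you flag (each step raising the valuation by at least one rank) is exactly the step the paper relies on, and it holds because $x-x.a^{-1}s=-x.t(a^{-1})^{\vfi}q$ has valuation at least $v(x.t)>v(x)$.
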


%

Par ce qui précède, on peut supposer que $M=M_{\geq \theta}$ et que $M_{tor}$ est fini. Il nous  reste alors à montrer le résultat suivant, avec lequel s'achève la preuve du théorème \ref{thmCmintrivi}.

\begin{lem}
Soit $(M,v)$ un module $K$-trivialement valué $C$-minimal non divisible et tel que 
$M_{\geq \theta}=M$ et $M_{tor}$ est fini. Alors $M=M_{tor} \oplus M_{>\theta}$.
\begin{proof}
Puisque $M_{tor}$ est fini la multiplication par $t$ induit une bijection
de $M_{tor}$. Donc $M_{tor}=\text{ann}_M(t^k-1)$ pour un certain $k \in \N\setminus \{0\}$. Par conséquent, $M_{tor}$ n'est pas divisible donc $M$ non plus. Donc $M.(t^k-1)$ contient une boule (ouverte ou fermée) qui y est d'indice fini, d'après le lemme \ref{c-min_d'indice_fini}. En particulier le quotient 
$M.(t^k-1)/(M_{>\theta} \cap M.(t^k - 
1))$ est fini; de plus il est sans torsion. Or, puisque $R$ est infini, le 
seul module sans torsion fini est le module $0$. Cela implique  que 
$M.(t^k-1) \subset M_{>\theta}$. 

Montrons  l'inclusion réciproque $M_{>\theta}\subset M.(t^k-1)$: $(M,v)$ est henselien par le lemme ci-dessus; en particulier $M_{>\theta}$ est divisible par le polynôme $t^k-1$; cela implique  que la suite  ci-dessous est exacte (et clairement scindée):
$$0 \longrightarrow M_{tor} \longrightarrow M \xrightarrow{.(t^k-1)} (M_{>\theta})  \longrightarrow 0.$$
D'où $M_{>\theta}$ est facteur direct dans $M$.
\end{proof}
\end{lem}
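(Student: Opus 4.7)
The plan is as follows. Since $\cdot t$ is injective on $M$ and $M_{tor}$ is a finite $\cdot t$-stable subgroup, multiplication by $t$ restricts to a permutation of $M_{tor}$; taking its order $k \geq 1$ gives $M_{tor} = \text{ann}_M(t^k - 1)$. Assuming $M_{tor} \neq 0$, the module $M_{tor}$ is not $(t^k-1)$-divisible, so $M$ is not $(t^k-1)$-divisible either, and $N := M\cdot(t^k-1)$ is a proper, infinite, definable subgroup of $M$ (the infinity coming from $N \cong M/M_{tor}$ with $M$ infinite and $M_{tor}$ finite).

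Next, I apply Proposition~\ref{c-min_d'indice_fini} to $N$: it contains a ball $M_{\geq\gamma}$ or $M_{>\gamma}$ of finite index in $N$. Since $M = M_{\geq\theta}$ and $N$ is proper in $M$, such a $\gamma$ must satisfy $\gamma \geq \theta$, and in fact the ball sits inside $M_{>\theta}$ (the alternative ball equal to $M_{\geq\theta} = M$ is excluded). The core algebraic step is now to upgrade this to $N \subseteq M_{>\theta}$: consider $Q := N/(N \cap M_{>\theta})$, which embeds into $M/M_{>\theta} = M_{\geq\theta}/M_{>\theta}$. By the earlier corollary on $C$-minimal groups, this latter quotient is a $K$-vector space; the ball-of-finite-index forces $Q$ to be finite; and since $K$ is infinite, $Q = 0$, hence $N \subseteq M_{>\theta}$.

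For the reverse inclusion, the henselian lemma proved above gives that the separable polynomial $t^k-1$ acts surjectively on $M_{>\theta}$; combined with $M_{>\theta} \cap M_{tor} = \{0\}$ (which follows from $v(M_{tor}) \subseteq \{\theta, \infty\}$), the map $\cdot(t^k-1)$ is in fact a bijection of $M_{>\theta}$. So $N = M_{>\theta}$, and the short exact sequence
\[
0 \longrightarrow M_{tor} \longrightarrow M \xrightarrow{\cdot(t^k-1)} M_{>\theta} \longrightarrow 0
\]
splits via the inverse of $\cdot(t^k-1)$ on $M_{>\theta}$, yielding $M = M_{tor} \oplus M_{>\theta}$.

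The main obstacle I expect is the second step, namely turning the purely geometric output of $C$-minimality (a ball of finite index inside $N$) into the sharp algebraic statement $N \subseteq M_{>\theta}$. What makes this work is the combination of two earlier results: the $K$-vector-space structure of $M_{\geq\theta}/M_{>\theta}$ coming from $C$-minimality, and the infiniteness of $K$, which together force any finite subquotient to be trivial. Once this inclusion is in hand, the henselian property of $(M,v)$ takes care of both the reverse inclusion and the splitting in one stroke.
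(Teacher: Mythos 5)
Your proof has the same skeleton as the paper's (the same $k$ with $M_{tor}=\text{ann}_M(t^k-1)$, the same appeal to Proposition \ref{c-min_d'indice_fini} to place a ball of finite index inside $N=M.(t^k-1)$, the same split exact sequence via henselianity), but the step you yourself identify as the crux --- passing from ``$Q=N/(N\cap M_{>\theta})$ is finite'' to ``$Q=0$'' --- is not correctly justified. You argue that $Q$ is a finite subgroup of the $K$-vector space $M_{\geq\theta}/M_{>\theta}$ and that $K$ infinite forces $Q=0$. That inference only works if $Q$ is a $K$-\emph{subspace} (or if $\cha K=0$): in characteristic $p>0$, which is precisely the setting of the paper's applications, a $K$-vector space has many nonzero finite additive subgroups (any copy of $\F_p$). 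And $Q$ need not be a subspace: $N=M.(t^k-1)$ is not stable under the action of $K$ in general, since $(t^k-1)R$ is a two-sided ideal only when $\vfi^k=\id$. Explicitly, for $y=x.(t^k-1)$ one computes $y.a-(x.a).(t^k-1)=x.t^k.(a-a^{\vfi^k})$, an element of valuation $v(x.t^k)$, which does not lie in $M_{>\theta}$ when $x\notin M_{>\theta}$ and $a^{\vfi^k}\neq a$; so the image of $N$ in $M/M_{>\theta}$ is not $K$-stable either, and the ``finite subgroup of a vector space over an infinite field'' argument does not apply.

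The paper closes this step by a different counting argument: the finite quotient is torsion-free as a module, and over the infinite ring $R$ a nonzero finite module always has nonzero torsion (for $\bar y\neq 0$ the map $r\mapsto\bar y.r$ has finite image, hence nonzero kernel), so the quotient must vanish. If you prefer to stay closer to your own route, you can instead invoke the other half of the Section 4 corollary: $\bar M:=M_{\geq\theta}/M_{>\theta}$ is a strongly minimal $R$-module which is either $0$ (in which case there is nothing to prove) or infinite (being a $K$-vector space with $K$ infinite), and its image $\bar M.(t^k-1)$ is a finite, hence non-cofinite, definable subgroup; Lemma \ref{frtmin} then forces $\bar M.(t^k-1)=0$, which is exactly $N\subseteq M_{>\theta}$. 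The remainder of your argument --- bijectivity of $\cdot(t^k-1)$ on $M_{>\theta}$ via henselianity and the resulting splitting --- is correct and agrees with the paper.
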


\section{Cas d'une valuation non $K$-triviale}
Les propositions ci-dessous établissent la preuve du théorème \ref{thmKnontrivi}. Rappelons que $(M,v)$ est dit r\'esiduellement divisible, si pour tout non zero $r\in R$, non zero $z\in M$ il existe $y\in M$, tel que 
$$v(z-y.r)>v(y.r)=v(y)\cdot r.$$

Notons que l'\'egalit\'e dans l'expression ci-dessus  peut ne peut \^etre toujours v\'erifi\'ee (on peut penser au cas o\`u $y$ est non zero et annul\'e par $r$.)

\begin{prop}\label{cmin}
Un module  valué $(M,v)$ affinement maximal, résiduellement divisible, ayant sa chaîne $v(M)$ $o$-minimale et tel que, pour chaque $r \in R\setminus\{0\}$, $\text{ann}_M(r)$ est fini, est $C$-minimal.
\begin{proof}
Par \cite{Onay2018a}, Th\'eor\`eme 4.6, on sait que toute formule à une seule variable libre $x$, avec paramètres, est équivalente à une formule de la forme $$\phi(x) \wedge \psi(v(t_1(x)),\dots,v(t_k(x)), \overline{\gamma}),$$ où 
$\phi$ est une formule sans quantificateur avec paramètres dans le langage des $R$-modules, $\psi$ est
une formule sans paramètres de $L_V'$,  les $t_i$ sont des termes avec paramètres dans le langage des $R$-modules  et $\overline{\gamma}$ est un $n$-uplet d'éléments de $v(M)$.
En particulier, $\phi$ est une combinaison booléenne de formules de la forme $x.r=b$. Puisque $\text{ann}_M(r)$ est fini, elle définit un ensemble fini ou cofini. On se ramène donc à montrer que $\psi(v(t_i(x))_{i})$ décrit une combinaison booléenne de boules ouvertes ou fermées.
On se contentera de montrer que l'on peut remplacer les termes $v(t_i(x))$ par des termes
de la forme $\tau^k(v(x-c_i))$ où $k \in \N$ et $c_i \in M$. Ainsi $\psi$ sera de la forme $\psi'(v(x-c_1),\dots,v(x-c_n))$. 
La $C$-minimalité de $(M,v)$ suit alors de l'$o$-minimalité de la $R$-chaîne $v(M)$   par des considérations ultramétrique générales (comme il a été explicité dans la preuve de la proposition 36 de \cite{Maalouf2010a}). 

\noindent
Les termes $t_i(x)$ sont de la forme $x.r_i - a_i$ ($r_i \in R$, $a_i \in M$). 
Par divisiblité de $M$, il existe $b_i \in M$ tel que 
$b_i.r_i= a_i$. Ainsi chaque terme $t_i(x)$ est égal à un terme de la forme $(x-b_i).r_i$. La preuve sera donc achevée dès que nous aurons prouvé l'assertion suivante:

\noindent
Etant donné $r \in R\setminus\{0\}$, il existe un recouvrement fini de $M$ par des ensembles dont chacun, disons $E$, est une combinaison booléenne de boules de $M$ et tel que pour un certain paramètre $d$,  
$(M,v) \models \forall x \in E, 
           v(x  \cdot r ) = v(x-d) \cdot r$. 

\noindent
En effet, d'une part par le résultat 3.36 dans \cite{Onay2018a}, on a 
$$\forall x \bigvee_{c \in ann_M(r)} v(x.r)=v((x-c).r)=v(x-c)\cdot r.$$
Et d'autre  part, pour tout $c_0 \in \text{ann}_M(r)$, l'ensemble 
$\{x \in M \tq v((x-c_0).r)=v(x-c_0)\cdot r\}$ est exactement l'ensemble des éléments $x$ tel que $x-c_0$ est régulier pour $r$,  et est égal, par le m\^eme r\'esultat, l'ensemble $\{x \tq v(x-c_0)=\max\{v(x-c_0-c); c\in \text{ann}_M(r) \}\}$. Ce dernier ensemble est clairement une combinaison booléenne de boules.

\end{proof}
\end{prop}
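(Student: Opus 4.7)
The plan is to apply Th\'eor\`eme 4.6 of \cite{Onay2018a} in order to put any formula $\chi(x)$ with one free variable and parameters into the normal form
$\phi(x)\wedge \psi\bigl(v(t_1(x)),\dots,v(t_k(x)),\overline{\gamma}\bigr)$,
where $\phi$ is a quantifier-free module formula (a Boolean combination of equations $x.r=b$), the $t_i(x)=x.r_i-a_i$ are module terms with parameters, and $\psi$ is an $L_V'$-formula without parameters. It is then enough to show that each conjunct defines a Boolean combination of balls of $M$.

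For the module part, each atomic equation $x.r=b$ defines either the empty set or a coset of $\text{ann}_M(r)$; since $\text{ann}_M(r)$ is finite by hypothesis, $\phi$ cuts out a finite or cofinite subset of $M$, which is trivially a Boolean combination of (singleton) balls. The real content is the chain conjunct. Using residual divisibility applied to $a_i$, I would choose $b_i\in M$ with $b_i.r_i=a_i$, so that $t_i(x)=(x-b_i).r_i$. The next step is to invoke the combination of the reformulation of Th\'eor\`eme 3.35 (the Fait recalled in Section 2) with result 3.36 of \cite{Onay2018a}: for every $x$ one has $\bigvee_{c\in\text{ann}_M(r_i)} v(x.r_i)=v((x-c).r_i)=v(x-c)\cdot r_i$, and $y$ is $r_i$-regular precisely when $v(y)=\max\{v(y-c):c\in\text{ann}_M(r_i)\}$. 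Partitioning $M$ according to which $c\in\text{ann}_M(r_i)$ realizes this maximum for $x-b_i$ yields a finite partition of $M$ by sets defined by valuation comparisons between the finitely many $v(x-b_i-c)$; such sets are themselves Boolean combinations of balls. On each atom, $v(t_i(x))$ simplifies to $v(x-d_{i,c})\cdot r_i$ for an appropriate parameter $d_{i,c}\in M$.

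After this simplification and refinement of the atoms across all indices $i$, the original chain formula becomes, on each atom, a formula in $L_V'$ whose arguments have the shape $v(x-d)$ for a finite list of parameters $d\in M$. The $o$-minimality of $v(M)$ in $L_V'$ then describes the corresponding subset of $v(M)$ as a finite Boolean combination of intervals, which by the standard ultrametric translation recalled in the proof of Proposition 36 of \cite{Maalouf2010a} pulls back to a Boolean combination of balls of $M$. The main obstacle is precisely the regularization step: to exhibit a definable, finite partition of $M$ into pieces on which the complicated terms $v((x-b_i).r_i)$ collapse to terms of the form $v(x-d)\cdot r_i$. This is where the affinely maximal and residually divisible hypotheses combine with the finiteness of annihilators to produce a partition that is both finite and itself ball-definable; once this is achieved, the $o$-minimality of $v(M)$ concludes the argument in a routine way.
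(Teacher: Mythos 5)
Your argument follows the paper's proof essentially step for step: the same appeal to Théorème 4.6 for the normal form, the same disposal of the module conjunct via finiteness of annihilators, the same regularization of the terms $(x-b_i).r_i$ by partitioning $M$ according to which $c\in\text{ann}_M(r_i)$ maximizes $v(x-b_i-c)$ (via result 3.36 of the companion paper), and the same conclusion through $o$-minimality of $v(M)$ and the ultrametric translation from Proposition 36 of Maalouf. The proposal is correct and there is no substantive difference from the paper's proof.
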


Pour la d\'emonstration suivante on rappelle que l'ensemble $Saut_M(r)$ (not\'e $\jumpm{M}{r}$ dans \cite{Onay2018a}) est l'ensemble des valeurs $\gamma$ tel qu'il existe un $x\in M$ de valuation $\gamma$ satisfaisant 
$$v(x.r)>v(x)\cdot r.$$

\begin{prop}
Soit $(M,v)$ un module valué non $K$-trivialement  et $C$-minimal. Alors
$(M,v)$ est affinement maximal et résiduellement divisible, et pour tout 
$r\in R\setminus \{0\}$, 
$\text{ann}_M(r)$ est fini.
\begin{proof}
Puisque $v_K$ est non triviale sur $K$,  $v(M)$ n'est pas minoré. Or 
$Saut_M(r)$ est fini pour tout $r \in R \setminus \{0\}$ donc $\text{ann}_M(r)$
ne peut pas contenir de boule propre d'indice fini et doit être fini. 
De plus, pour les mêmes raisons, $v(M.r)$ n'est pas minoré, donc $M.r$ 
ne peut contenir de boule propre d'indice fini et $M.r=M$. D'où $M$ est divisible. 

Supposons que $(M,v)$ n'est pas affinement maximal. Comme $M$ est d\'ej\`a divisible, il est r\'esiduellement divisible. Dans ce cas, par le théorème 3.35 \cite{Onay2018a},
  il existe $r \in R \setminus\{0\}$ et $y \in M\setminus\{0\}$ tels que pour tout $x$ vérifiant $x.r=y$, $x$ est irrégulier pour $r$. Puisque 
$M$ est divisible, l'ensemble $A:=\{x \in M \tq x.r=y\}$ est non-vide et puisque 
$\text{ann}_M(r)$ est fini, il s'écrit comme $A=\{x_1,\dots, x_k\}$. Posons
$\gamma=\max_i\{v(x_i)\}$ et prenons $x\in A$ de valuation $\gamma$. Alors
$y \in M_{>\gamma \cdot r} \setminus \left(M_{>\gamma}\right).r$. On va montrer que ceci conduit à une contradiction. Puisque $\gamma$ est un saut, il est limite inférieure dans $v(M)$  des éléments $\delta \in v(M)\setminus Saut_M(r)$ par le lemme 2.11 \cite{Onay2018a} et par la finitude de $Saut_M(r)$; donc il est limite des éléments $\delta$ tel que  $v(M_{>\delta}.r)$ et $v(M_{>\delta \cdot r})$ définissent le même segment final de $v(M)$. Cela impose que 
 $M_{>\gamma \cdot r}=\left(M_{>\gamma}\right).r$. Contradiction.
 \end{proof}
\end{prop}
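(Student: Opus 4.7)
Le plan est de démontrer les trois conclusions---finitude des annulateurs, divisibilité (d'où la divisibilité résiduelle suit aisément), puis maximalité affine---dans cet ordre, en exploitant à chaque étape la finitude de l'ensemble des sauts $Saut_M(r)$ établie dans \cite{Onay2018a} et la dichotomie des sous-groupes fournie par la proposition \ref{c-min_d'indice_fini}. L'observation préliminaire clé est que la non-trivialité de $v_K$ sur $K$ force $v(M)$ à ne pas être minoré: en choisissant $a\in K^{\times}$ avec $v_K(a)<0$, pour tout $\gamma\in v(M)\setminus\{\infty\}$ les axiomes de l'action de $K$ donnent $\gamma\cdot a<\gamma$, et par itération on obtient une suite $(\gamma\cdot a^n)_{n\geq 0}$ strictement décroissante et non minorée dans $v(M)$.

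On commencerait par montrer que $\text{ann}_M(r)$ est fini pour tout $r\in R$ non nul. Le point est que $v(\text{ann}_M(r))\subseteq Saut_M(r)\cup\{\infty\}$: en effet, si $x.r=0$ avec $x\neq 0$, alors $v(x.r)=\infty>v(x)\cdot r$, donc $v(x)\in Saut_M(r)$. Comme $Saut_M(r)$ est fini, $v(\text{ann}_M(r))$ l'est également. Si $\text{ann}_M(r)$ était un sous-groupe propre infini, la proposition \ref{c-min_d'indice_fini} y placerait une boule $M_{\geq\gamma}$ ou $M_{>\gamma}$ d'indice fini, dont l'image par $v$ contient le segment $[\gamma,\infty]\cap v(M)$, infini par l'observation ci-dessus. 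Cette contradiction force $\text{ann}_M(r)$ à être fini.

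La divisibilité $M.r=M$ s'obtient par l'argument dual. Pour tout $\gamma\in v(M)\setminus Saut_M(r)$, tout $x$ de valuation $\gamma$ est régulier pour $r$, d'où $\gamma\cdot r\in v(M.r)$; puisque l'action de $r$ sur la $R$-chaîne est croissante et $v(M)$ non minoré, $v(M.r)$ l'est également. Si $M.r$ était un sous-groupe propre, la proposition \ref{c-min_d'indice_fini} y produirait de nouveau une boule d'indice fini, forçant $v(M.r)$ à avoir un minimum, contradiction. Donc $M.r=M$ et $M$ est divisible. La divisibilité résiduelle suit alors: étant donnés $r\neq 0$ et $z\neq 0$, on prend $y_0$ avec $y_0.r=z$; si $y_0$ n'est pas régulier, on le translate par un élément de l'ensemble fini $\text{ann}_M(r)$ pour obtenir un $y$ régulier avec $y.r=z$, et alors $v(z-y.r)=\infty>v(y)\cdot r=v(y.r)$.

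L'obstacle principal sera la maximalité affine, que l'on traiterait par l'absurde à l'aide du théorème 3.35 de \cite{Onay2018a}: si $(M,v)$ n'est pas affinement maximal, il existe $r\in R\setminus\{0\}$ et $y\in M\setminus\{0\}$ tels que tout $x$ vérifiant $x.r=y$ soit irrégulier pour $r$. Par divisibilité la fibre $A=\{x\in M:x.r=y\}$ est non vide, et par finitude de $\text{ann}_M(r)$ elle est finie; on pose $\gamma=\max_{x\in A}v(x)$ et on fixe $x_0\in A$ atteignant ce maximum. L'irrégularité de $x_0$ donne $v(y)=v(x_0.r)>\gamma\cdot r$, donc $y\in M_{>\gamma\cdot r}$; par ailleurs $y\notin(M_{>\gamma}).r$ par maximalité de $\gamma$ dans $A$. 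La contradiction viendra de l'analyse des sauts: $\gamma\in Saut_M(r)$, et par le lemme 2.11 de \cite{Onay2018a} combiné à la finitude de $Saut_M(r)$, $\gamma$ est la borne inférieure d'éléments $\delta\in v(M)\setminus Saut_M(r)$ approchant $\gamma$ par en bas. Pour chacun de ces $\delta$ tout élément de valuation $\delta$ est régulier, d'où $v((M_{>\delta}).r)$ et $v(M_{>\delta\cdot r})$ coïncident comme segments finaux de $v(M)$; le passage à la limite $\delta\to\gamma$ identifie alors $(M_{>\gamma}).r$ avec $M_{>\gamma\cdot r}$, contredisant l'existence de $y$. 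Le point délicat, où l'argument est facile à mal formuler, est précisément cette identification finale des segments par passage à la limite, qui repose de manière cruciale sur l'interaction entre l'$o$-minimalité de $v(M)$, la finitude de $Saut_M(r)$, et l'énoncé de densité fin du lemme 2.11.
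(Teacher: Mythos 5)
Votre démonstration suit globalement la même stratégie que celle de l'article : non-minoration de $v(M)$ tirée de la non-trivialité de $v_K$, finitude de $\text{ann}_M(r)$ puis divisibilité via la proposition \ref{c-min_d'indice_fini} et la finitude de $Saut_M(r)$, enfin maximalité affine par l'absurde à l'aide du théorème 3.35 et du lemme 2.11 de \cite{Onay2018a}. Ces étapes-là sont correctes, au même niveau de compression que l'article.

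Le point défectueux est votre déduction de la divisibilité résiduelle. Vous affirmez que, la fibre $y_0+\text{ann}_M(r)$ étant finie, on peut translater $y_0$ par un élément de $\text{ann}_M(r)$ pour obtenir une préimage \emph{régulière} de $z$. Or l'énoncé « toute fibre de $\cdot r$ contient un élément régulier » est précisément la caractérisation (le Fait, reformulation du théorème 3.35) des modules à la fois résiduellement divisibles \emph{et} affinement maximaux : c'est la conclusion visée, et non une conséquence de la seule divisibilité et de la finitude de $\text{ann}_M(r)$. L'argument est donc circulaire. Pire, s'il était disponible à ce stade, votre dernier paragraphe deviendrait vide : la fibre $A=\{x \tq x.r=y\}$, qui est une classe modulo $\text{ann}_M(r)$, contiendrait d'emblée un élément régulier, ce qui contredirait immédiatement l'hypothèse « tout $x$ de la fibre est irrégulier » sans aucun recours au lemme 2.11 ni à l'analyse des sauts. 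L'article, lui, se contente d'invoquer que la divisibilité entraîne la divisibilité résiduelle — un énoncé strictement plus faible, qui ne demande qu'une approximation de $z$ au premier ordre par $y.r$ avec $y$ régulier, et non une préimage exacte régulière — et qui relève de \cite{Onay2018a}. Il faut donc ou bien citer ce fait, ou bien le démontrer sans passer par la maximalité affine.

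Signalons aussi une incohérence de direction dans l'argument final : vous écrivez que $\gamma$ est la \emph{borne inférieure} d'éléments $\delta \in v(M)\setminus Saut_M(r)$ « approchant $\gamma$ par en bas ». Si $\gamma$ en est la borne inférieure, ces $\delta$ sont au-dessus de $\gamma$ ; et c'est bien l'approche par au-dessus qui est requise pour écrire $M_{>\gamma}$ (et $M_{>\gamma\cdot r}$) comme réunion de boules de rayons $\delta>\gamma$ d'infimum $\gamma$, puis passer à la limite pour identifier $\left(M_{>\gamma}\right).r$ et $M_{>\gamma\cdot r}$.
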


Enfin, on donne une conséquence du résultat ci-dessus dans le cas des corps valués: par  le lemme 2.11 \cite{Onay2018a}, il suit qu'une $R$-chaîne $o$-minimale est dense et pleine et il suit par le corollaire \cite{Onay2018a} 2.26, qu'une $R$-cha\^ine pleine et dense est $o$-minimale. De plus,
  par la remarque 2.29 \cite{Onay2018a},  un groupe abélien ordonné plein comme $R$-chaîne est divisible. Ensuite, par l'exemple 2.28(2) \cite{Onay2018a},  la $R$-cha\^ine $v(\mathcal{K})$, d'un corps aux difference valu\'e $(\mathcal{K},\sigma, v)$, o\`u $R=\mathcal{K}[t;\sigma]$ est pleine et dense que $v(\mathcal{K})$ est un $\mathbb{Z}[\sigma]$-module (de nouveau par \cite{Onay2018a}, 2.28). Enfin par le corollaire \cite{Onay2018a}, un corps   valué hens\'elien $(K,v)$ de caractéristique $p>0$ est algébriquement maximal si et seulement s'il est affinement maximal comme $K[t;x\mapsto x^p]$-module valué. 

\begin{corr} On a:
\begin{itemize}
	
\item Soit $(K,v)$  un corps   valué infini hens\'elien et de caractéristique $p>0$. 
 Alors $(K,v)$  est $C$-minimal comme $K[t;x\mapsto x^p]$-module valué si et seulement s'il est algébriquement maximal avec  un corps résiduel $p$-clos
et un groupe de valuation divisible.

\item Soit $\mathcal{K}$ l'ultraproduit  des corps alg\`ebriquement valu\'es $\mathcal{K}_{p^n}$,  de caract\'eristique  $p>0$, \'equipp\'e chacun du morphisme $x\mapsto x^{p^n}$, selon un ultrafiltre $U$ sur $\{p^n \tq n\in \mathbb{N}, \; \text{et} \; p \; \text{premier}\}$. $\mathcal{K}_{p^n}$   muni de l'automorphisme  limite, {\it Frobenius non-standard}, i.e., $\sigma_{U}:=\lim_{U} x \mapsto x^{p^n}$, est $C$-minimal comme $\mathcal{K}[t;\sigma]$-module valu\'e.
\end{itemize}
\end{corr}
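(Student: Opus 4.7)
The plan is to apply Theorem \ref{thmKnontrivi} to the valued field viewed as a module over the relevant skew polynomial ring, and to translate each of its three hypotheses---affine maximality, residual divisibility, and $L_V$-$o$-minimality of $v(M)$---into the field-theoretic conditions listed in the corollary.

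For the first bullet, with $R = K[t;x\mapsto x^p]$, the equivalence \emph{affine maximality $\Leftrightarrow$ algebraic maximality} is precisely the last result cited from \cite{Onay2018a} in the paragraph preceding the statement. For \emph{residual divisibility $\Leftrightarrow$ $p$-closed residue field} under the henselian hypothesis, I would specialize the definition of residual divisibility to $r = t$, which directly yields surjectivity of Frobenius on the residue field; conversely, given $p$-closedness and henselianity, every separable $s \in R$ can be inverted by first solving $y.s \equiv z$ modulo higher valuation via iterated use of the residual Frobenius, then lifting through henselianity applied to the associated additive polynomial. The equivalence between \emph{$L_V$-$o$-minimality of $v(K)$} and \emph{divisibility of the value group} follows by combining Lemma 2.11 and Corollary 2.26 of \cite{Onay2018a} ($o$-minimality is equivalent to fullness plus density of the $R$-chain), Remark 2.29 (fullness implies divisibility as an ordered group), and Example 2.28(2) (for the value group of a difference-valued field in characteristic $p$, fullness reduces to divisibility because the action $\gamma \mapsto p\gamma$ is then surjective); density is automatic for any non-trivial divisible ordered group.

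For the second bullet, I would apply Theorem \ref{thmKnontrivi} directly to $\mathcal{K}$ as a $\mathcal{K}[t;\sigma_U]$-module. By {\L}o\'s's theorem, $\mathcal{K}$ inherits the first-order properties of the factors $\mathcal{K}_{p^n}$: it is algebraically closed (hence henselian and, trivially, algebraically maximal), its residue field is algebraically closed (so $\sigma_U$ is surjective on residues), and its value group is divisible. The same three translations as in the first bullet, now with $\sigma_U$ in place of Frobenius, then yield affine maximality, residual divisibility, and $L_V$-$o$-minimality of $v(\mathcal{K})$, and Theorem \ref{thmKnontrivi} concludes.

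The main obstacle is the equivalence \emph{residual divisibility $\Leftrightarrow$ $p$-closed residue field} in the henselian case: the direction $\Rightarrow$ is immediate by restriction to $r = t$, but the converse requires lifting residual solutions for arbitrary separable $r \in R$ while preserving the regularity condition $v(y.r) = v(y)\cdot r$, which is where the interplay between henselianity and the skew polynomial structure must be carefully invoked. A secondary care point is ensuring that the affine-maximality translation, stated for Frobenius in the first bullet, carries over to $\sigma_U$ in the second: this reduces to the fact that every $\mathcal{K}_{p^n}$ is algebraically closed and hence solves every additive polynomial regularly, a property preserved by {\L}o\'s.
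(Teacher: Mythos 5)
Your proposal follows the paper's route exactly: the paper proves this corollary simply by applying Théorème \ref{thmKnontrivi} together with the chain of translations recorded in the paragraph immediately preceding the statement (affine maximality $\leftrightarrow$ algebraic maximality for henselian fields of characteristic $p$, $o$-minimality of the $R$-chain $\leftrightarrow$ fullness and density $\leftrightarrow$ divisibility of the value group, all via the cited results of \cite{Onay2018a}), plus {\L}o\'s's theorem for the ultraproduct, which is precisely your plan. The only small imprecision is in your translation of residual divisibility: specializing to $r=t$ yields only surjectivity of Frobenius (perfectness) on the residue field, whereas $p$-closedness requires surjectivity of every additive polynomial there, so the forward direction must reduce every separable $r$ modulo the maximal ideal rather than just $t$ --- but this is the same reduction and does not affect the argument.
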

\begin{remarque}
Noter que en prenant un ultrafiltre trivial, le second point ci-dessus implique le premir.
\end{remarque}
\bibliographystyle{apalike}

\bibliography{/home/onayg/onayg.bib}
\end{document}